\numberwithin{equation}{section} 
\newcommand{\bea}{\begin{eqnarray}}
\newcommand{\eea}{\end{eqnarray}}
\newcommand{\ba}{\begin{array}}
\newcommand{\ea}{\end{array}}
\newcommand{\edc}{\end{document}}
\newcommand{\bc}{\begin{center}}
\newcommand{\ec}{\end{center}}
\newcommand{\be}{\begin{equation}}
\newcommand{\ee}{\end{equation}}
\def\bc{{\mathbb C}}
\def\m{\mu}
\newtheorem{thm}{Theorem}[section]
\newtheorem{lem}[thm]{Lemma}
\newtheorem{cor}[thm]{Corollary}
\newtheorem{prop}[thm]{Proposition}
\newtheorem{defin}[thm]{Definition}
\theoremstyle{remark}
\newtheorem{rem}{Remark}[section]
\newtheorem{ex}{Example}[section]
\begin{document}

\title{Hypercyclic and supercyclic linear operators on non-Archimedean vector spaces}


\author{Farrukh Mukhamedov}
\address{Farrukh Mukhamedov\\
Department of Mathematical Sciences\\
College of Science, The United Arab Emirates University\\
P.O. Box, 15551, Al Ain\\
Abu Dhabi, UAE} \email{{\tt far75m@gmail.com} {\tt
farrukh.m@uaeu.ac.ae}}

\author{Otabek Khakimov}
\address{Otabek Khakimov\\
Institute of mathematics, National University of Uzbekistan, 29,
Do'rmon Yo'li str., 100125, Tashkent, Uzbekistan.} \email {{\tt
hakimovo@mail.ru}}

\begin{abstract}
Main objective of the present paper is to develop the theory of
hypercyclicity and supercyclicity of linear operators on
topological vector space over non-Archimedean valued fields. We
show that there does not exist any hypercyclic operator on finite
dimensional spaces. Moreover, we give sufficient and necessary
conditions of hypercyclicity (resp. supercyclicity) of linear
operators on separable $F$-spaces. It is proven that a linear
operator $T$ on topological vector space $X$ is hypercyclic
(supercyclic) if it satisfies Hypercyclic (resp. Supercyclic)
Criterion. We consider backward shifts on $c_0$, and characterize
hypercyclicity and supercyclicity of such kinds of shifts.
Finally, we study hypercyclicity, supercyclicity of operators
$\lambda I+\mu B$, where $I$ is identity and $B$ is backward
shift. We note that there are essential differences between the
non-Archimedean and real cases.

\vskip 0.3cm \noindent {\it
Mathematics Subject Classification}: 47A16, 37A25, 46A06, 46P05, 47S10\\
{\it Key words}: non-Archimedean valuation, hypercylic operator,
supercyclic operator, backward shift operator.
\end{abstract}

\maketitle

\section{introduction}

Linear dynamics is a young and rapidly evolving branch of functional
analysis, which was probably born in 1982 with the Toronto Ph.D.
thesis of C. Kitai \cite{K}. It has become rather popular, thanks to
the efforts of many mathematicians (see \cite{GS,GE}). In
particular, hypercyclicity and supercyclicity of weighted bilateral
shifts were characterized by Salas \cite{S1,S2}. In \cite{Shk1} it
has been proved that there exists a bounded linear operator $T$
satisfying the Kitai Criterion on separable infinite dimensional
Banach space. For more detailed information about cyclic,
hypercyclic linear operators we refer to \cite{B}.

We stress that all investigations on dynamics of linear operators
were considered over the field of the real or complex numbers. On
the other hand, non-Archimedean functional analysis is
well-established discipline, which was developed in Monna's series
of works in 1943. Last decades there have been published a lot of
books devoted to the non-Archimedean functional analysis (see for
example \cite{PG,Sch}). A main objective of the present paper is to
develop the theory of hypercyclicity and supercyclicity of linear
operators on a topological vector space over non-Archimedean valued
fields. In section 3, we will show that there does not exist any
hypercyclic operator on a finite dimensional space. Moreover, we
give sufficient and necessary conditions of hypercyclicity
(supercyclicity) of linear operators on separable $F$-spaces Theorem
\ref{Btthm} (resp. Theorem \ref{Btthm2}). We will show that a linear
operator $T$ on topological vector space $X$ is hypercyclic
(supercyclic) if it satisfies Hypercyclic (resp. Supercyclic)
Criterion.  Note that the shift operators have many applications in
many branches of modern mathematics (in real setting). But if one
considers this type of operators in a non-Archimedean setting, it
turns out that the shift operators have certain applications in
$p$-adic dynamical systems \cite{Je,KL}. Therefore, in section 4 we
consider backward shifts on $c_0$, and characterize hypercyclicity
and supercyclicity of such kinds of operators. In section 5 we will
consider an operator $\lambda I+\mu B$, where $I$ is identity and
$B$ is backward shift. We prove that the operator $I+\mu B$ cannot
be hypercyclic while in the real case this operator is hypercyclic
when $|\m|>1$. This is an essential difference between the
non-Archimedean and real cases.

\section{definations and preliminary results}

All fields appearing in this paper are commutative. A valuation on a field $\mathbb K$ is
a map $|\cdot|:\mathbb K\to[0,+\infty)$ such that:\\
$(i)$ $|\lambda|=0$ if and only if $\lambda=0$,\\
$(ii)$ $|\lambda\mu|=|\lambda|\cdot|\lambda|$ (multiplicativity),\\
$(iii)$ $|\lambda+\mu|\leq|\lambda|+|\mu|$ (triangle inequality),
for all $\lambda,\mu\in\mathbb K$. The pair $(\mathbb K,|\cdot|)$ is called a valued field.
We often write $\mathbb K$
instead of $(\mathbb K,|\cdot|)$.
\begin{defin}
Let $\mathbb K=(\mathbb K,|\cdot|)$ be a valued field. If $|\cdot|$
satisfies the strong triangle inequality: $(iii')$
$|\lambda+\mu|\leq|\max\{|\lambda|,|\mu|\}$, for all
$\lambda,\mu\in\mathbb K$, then  $|\cdot|$ is called
non-Archimedean, and $\mathbb K$ is called a non-Archimedean valued
field
\end{defin}

\begin{rem} In what follows, we always assume a norm in non-Archimedean valued
field is nontrivial.
\end{rem}

From the strong triangle inequality we get the following useful
property of non-Archimedean value: If $|\lambda|\neq|\mu|$ then
$|\lambda\pm\mu|=\max\{|\lambda|,|\mu|\}$. We frequently use this
property, and call it as the non-Archimedean norm's property. A
non-Archimedean valued field $\mathbb K$ is a metric space and it is
called {\it ultrametric space}.

Let $a\in\mathbb K$ and $r>0$. The set
$$
B(a,r):=\{x\in\mathbb K: |x-a|\leq r\}
$$
is called the {\it closed ball with radius $r$ about $a$}. (Indeed, $B(a,r)$ is closed in the induced topology).
Similarly,
$$
B(a,r^-):=\{x\in\mathbb K: |x-a|<r\}
$$
is called the {\it open ball with radius $r$ about $a$}.

We set
$|\mathbb K|:=\{|\lambda|: \lambda\in\mathbb K\}$ and $\mathbb K^\times:=\mathbb K\setminus\{0\}$,
the {\it multiplicative group} of $\mathbb K$. Also,
$|\mathbb K^\times|:=\{|\lambda|: \lambda\in\mathbb K^\times\}$ is a multiplicative group
of positive real
numbers, the {\it value group} of $\mathbb K$. There are two possibilities:
\begin{lem}\label{absvalsubgr}$($\cite{Sch}$)$
Let $\mathbb K$ be a non-Archimedean valued field. Then the value
group of $\mathbb K$ either is dense or is discrete; in the latter
case there is a real number $0<r<1$ such that $|\mathbb
K^\times|=\{r^s: s\in{\mathbb Z}\}$.
\end{lem}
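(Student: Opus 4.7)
The plan is to reduce the statement to the classical dichotomy for additive subgroups of $(\mathbb R,+)$. The value group $G:=|\mathbb K^\times|$ is a multiplicative subgroup of $(0,+\infty)$, so taking logarithms transports it to an additive subgroup $H:=\log G$ of $(\mathbb R,+)$. Because the norm is assumed nontrivial, $G\neq\{1\}$ and hence $H\neq\{0\}$. The proposition about $\mathbb K$ then follows from the assertion that every additive subgroup $H\subseteq\mathbb R$ is either dense in $\mathbb R$ or is of the form $\alpha\mathbb Z$ for some $\alpha>0$.

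To prove this auxiliary statement, I would set $\alpha:=\inf\{h\in H:h>0\}$. If $\alpha=0$, then for every $\varepsilon>0$ there exists $h\in H$ with $0<h<\varepsilon$, and the multiples of such $h$ hit every interval of length $\varepsilon$ in $\mathbb R$; this shows $H$ is dense. If $\alpha>0$, I would first argue that the infimum is attained, i.e.\ $\alpha\in H$: otherwise one could find $h_1,h_2\in H$ with $\alpha<h_1<h_2<\alpha+\alpha/2$, giving $0<h_2-h_1<\alpha$, a contradiction. Then for an arbitrary $h\in H$, write $h=n\alpha+\rho$ with $n\in\mathbb Z$ and $0\le\rho<\alpha$; since $\rho=h-n\alpha\in H$, the definition of $\alpha$ forces $\rho=0$. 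Thus $H=\alpha\mathbb Z$.

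Transporting back via the exponential map, if $H$ is dense in $\mathbb R$ then $G=e^H$ is dense in $(0,+\infty)$, which is the dense case of the lemma. If $H=\alpha\mathbb Z$ with $\alpha>0$, then $G=\{e^{s\alpha}:s\in\mathbb Z\}$. Setting $r:=e^{-\alpha}$ gives $0<r<1$ and $G=\{r^{-s}:s\in\mathbb Z\}=\{r^s:s\in\mathbb Z\}$, since the exponent ranges over all of $\mathbb Z$. This is precisely the discrete case claimed in the lemma.

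The only slightly delicate point is verifying that the infimum $\alpha$ is attained in the discrete case (the step that rules out $\alpha\notin H$); beyond that, the proof is routine bookkeeping, and the main conceptual content is simply the dichotomy for subgroups of $(\mathbb R,+)$.
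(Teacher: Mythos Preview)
Your proof is correct and is precisely the standard argument: pass to logarithms, invoke the dichotomy for additive subgroups of $\mathbb R$ (dense or cyclic), and transport back. Note, however, that the paper does not give its own proof of this lemma; it is simply quoted from Schneider's \emph{Nonarchimedean Functional Analysis} \cite{Sch}, so there is no in-paper argument to compare against. Your write-up would serve perfectly well as a self-contained proof, and the one point you flag as delicate---that the infimum $\alpha$ is actually attained when $\alpha>0$---is handled correctly by your two-element argument in $(\alpha,\tfrac{3}{2}\alpha)$.
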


\begin{defin} A pari $(E,\|\cdot\|)$ is called a $\mathbb K$-normed space over $\mathbb K$, if $E$ is a $\mathbb K$-vector
space and  $\parallel\cdot\parallel: E\to[0,+\infty)$ is a
non-Archimedean norm, i.e.
such that:\\
$(i)$ $\parallel\textbf{x}\parallel=\textbf{0}$ if and only if $\textbf{x}=\textbf{0}$,\\
$(ii)$ $\parallel\lambda\textbf{x}\parallel=|\lambda|\parallel\textbf{x}\parallel$,\\
$(iii)$ $\parallel\textbf{x}+\textbf{y}\parallel\leq\max\{\parallel\textbf{x}\parallel, \parallel\textbf{y}\parallel\}$,
for all $\textbf{x}, \textbf{y}\in E,\ \lambda\in\mathbb K$.
\end{defin}

 We frequently write $E$ instead of $(E,\parallel\cdot\parallel)$. $E$
is called a $\mathbb K$-Banach space or a Banach space over $\mathbb
K$ if it is complete with respect to the induced ultrametric
$d(\textbf{x}, \textbf{y})=\parallel\textbf{x}-\textbf{y}\parallel$.
\begin{ex} Let $\mathbb K$ be a non-Archimedean valued field; then
$$
l_\infty:=\mbox{all bounded sequences on }\mathbb K
$$
with pointwise addition and scalar multiplication and the norm
$$
\parallel\textbf{x}\parallel_\infty:=\sup_{n}|x_n|
$$
is a $\mathbb K$-Banach space.
\end{ex}

\begin{rem}
From now on we often drop the prefix "$\mathbb K$"- and write vector
space, normed space, Banach space instead of $\mathbb K$-vector
space, $\mathbb K$-normed space, $\mathbb K$-Banach space,
respectively.
\end{rem}

In what follows, we need the following auxiliary fact.

\begin{lem}\label{x_ny_nlambda_n}
Let $E$ be a normed space over a non-Archimedean valued field
$\mathbb K$. Then for each pair of sequences $(\textbf{x}_n)$ and
$(\textbf{y}_n)$ in $E$ such that
$\parallel\textbf{x}_n\parallel\cdot\parallel\textbf{y}_n\parallel\to0$
as $n\to\infty$ there exists a sequence $(\lambda_n)\subset\mathbb
K^\times$ such that
\begin{equation}\label{xylambda}
\lambda_n\textbf{x}_n\to\textbf{0}\ \ \ \mbox{and}\ \ \ \lambda_n^{-1}\textbf{y}_n\to\textbf{0},\ \ \ \ \mbox{as}\ \ n\to\infty.
\end{equation}
\end{lem}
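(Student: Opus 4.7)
The plan is to choose $|\lambda_n|$ close to the geometric-mean target $\sigma_n:=\sqrt{\|\textbf{y}_n\|/\|\textbf{x}_n\|}$: were one able to take $|\lambda_n|=\sigma_n$ exactly, both $|\lambda_n|\cdot\|\textbf{x}_n\|$ and $\|\textbf{y}_n\|/|\lambda_n|$ would equal $\sqrt{\|\textbf{x}_n\|\cdot\|\textbf{y}_n\|}$, which tends to $0$ by hypothesis. The only obstruction is that $|\lambda_n|$ is constrained to lie in the value group $|\mathbb K^\times|$, so one has to approximate $\sigma_n$ up to a bounded multiplicative factor.

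First I would set $a_n:=\|\textbf{x}_n\|$, $b_n:=\|\textbf{y}_n\|$ and dispose of the degenerate indices. Because the valuation is nontrivial, $|\mathbb K^\times|\subset(0,\infty)$ is unbounded above and has $0$ as a cluster point. So on indices where $a_n=0$ one picks any $\lambda_n\in\mathbb K^\times$ with $|\lambda_n|\geq n(b_n+1)$ (giving $\lambda_n\textbf{x}_n=\textbf{0}$ and $b_n/|\lambda_n|\leq 1/n$), and on indices where $b_n=0$ one picks $|\lambda_n|\leq 1/(n(a_n+1))$; in both situations \eqref{xylambda} is immediate on these indices.

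For the remaining indices, where $a_n,b_n>0$, I would invoke Lemma~\ref{absvalsubgr} to split by the two possibilities for the value group. In the dense case, choose any $t_n\in|\mathbb K^\times|\cap[\sigma_n/2,\sigma_n]$, which is nonempty by density. In the discrete case $|\mathbb K^\times|=\{r^s:s\in\mathbb Z\}$ with $0<r<1$, let $s_n$ be the unique integer with $r^{s_n}\leq\sigma_n<r^{s_n-1}$ and set $t_n:=r^{s_n}$, so that $r\sigma_n<t_n\leq\sigma_n$. Either way one has the uniform bounds
$$
t_n a_n \;\leq\; \sigma_n a_n \;=\; \sqrt{a_nb_n},\qquad \frac{b_n}{t_n}\;\leq\; C\sqrt{a_nb_n},
$$
with $C=2$ in the dense case and $C=r^{-1}$ in the discrete case; both upper bounds tend to $0$. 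Finally lift by picking any $\lambda_n\in\mathbb K^\times$ with $|\lambda_n|=t_n$.

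The only genuine difficulty beyond the real or complex situation (where one would just take $\lambda_n=\sigma_n$ directly) is the possibly discrete value group, and Lemma~\ref{absvalsubgr} is tailored exactly to this: rounding $\sigma_n$ to the nearest power of $r$ costs only a multiplicative constant $r^{-1}$, which is absorbed since $\sqrt{a_nb_n}\to 0$. I expect this rounding step to be the main (in fact essentially only) non-routine point in the argument.
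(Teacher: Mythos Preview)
Your proposal is correct and follows essentially the same approach as the paper: both arguments target the geometric mean $\sigma_n=\sqrt{\|\textbf{y}_n\|/\|\textbf{x}_n\|}$, invoke Lemma~\ref{absvalsubgr} to split into the dense and discrete value-group cases, and round $\sigma_n$ into $|\mathbb K^\times|$ at the cost of a harmless multiplicative constant. Your organization is in fact a bit cleaner---you dispose of the degenerate indices $a_n=0$ or $b_n=0$ uniformly at the outset and use density directly in the dense case, whereas the paper folds the degenerate cases into the discrete-case analysis and passes through auxiliary sequences $\textbf{x}'_n,\textbf{y}'_n$ in the dense case---but these are cosmetic differences, not a different route.
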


\begin{proof} First, we will prove the lemma for the case when a value group of
$\mathbb K$ is a discrete. Then according to Lemma \ref{absvalsubgr}
there exists a real number $r\in(0,1)$ such that $|\mathbb
K^\times|=\{r^s: s\in\mathbb Z\}$. Let $(n_k)$ and $(m_k)$ be the
increasing subsequences of $\mathbb{N}$ with
$(n_k)\cup(m_k)=\mathbb{N}$ such that
$$
\|\textbf{x}_{n_k}\|\cdot\|\textbf{y}_{n_k}\|=\textbf{0}, \ \
\|\textbf{x}_{m_k}\|\cdot\|\textbf{y}_{m_k}\|\neq\textbf{0}, \
\forall k
$$
Let us define $\nu_{n_k}\in\mathbb K$ as follows
$$
|\nu_{n_k}|=\left\{\begin{array}{lll}
1, & \mbox{if }\ \textbf{x}_{n_k}=\textbf{y}_{n_k}=\textbf{0};\\
\frac{\parallel\textbf{y}_{n_k}\parallel}{r^{{n_k}}}, & \mbox{if }\ \textbf{x}_{n_k}=\textbf{0},\ \textbf{y}_{n_k}\neq\textbf{0};\\[2mm]
\frac{r^{{n_k}}}{\parallel\textbf{x}_{n_k}\parallel}, & \mbox{if }\ \textbf{x}_{n_k}\neq\textbf{0},\ \textbf{y}_{n_k}=\textbf{0},\\[2mm]
\end{array}\right.
$$
Since $0<r<1$, for any $\varepsilon>0$ there exists positive integer
$k'$ such that $\|\nu_{n_k}\textbf{x}_{n_k}\|<\varepsilon$ and
$\|\nu_{n_k}^{-1}\textbf{y}_{n_k}\|<\varepsilon$ for any $k>k'$.

On the other hand, according to Lemma \ref{absvalsubgr}, there
exists a sequence of integer numbers $(\alpha_{m_k})$ such that
\begin{equation}\label{r^alpha_n}
r^{2\alpha_{m_k}}\leq\frac{\parallel\textbf{y}_{m_k}\parallel}{\parallel\textbf{x}_{m_k}\parallel}\leq
r^{2\alpha_{m_k}-2}.
\end{equation}
For any $k\geq1$ we take $\mu_{m_k}\in\mathbb K^\times$ such that
$|\mu_{m_k}|=r^{\alpha_{m_k}}$. Then from \eqref{r^alpha_n} we get
$$
\begin{array}{ll}
\parallel\mu_{m_k}\textbf{x}_{m_k}\parallel=r^{\alpha_{m_k}}\parallel\textbf{x}_{m_k}
\parallel\leq\parallel\textbf{x}_{m_k}\parallel^{\frac{1}{2}}\cdot
\parallel\textbf{y}_{m_k}\parallel^{\frac{1}{2}},\\[2mm]
\parallel\mu_{m_k}^{-1}\textbf{y}_{m_k}\parallel=r^{-\alpha_{m_k}}\parallel\textbf{y}_{m_k}
\parallel\leq r^{-1}\parallel\textbf{x}_{m_k}\parallel^{\frac{1}{2}}\cdot
\parallel\textbf{y}_{m_k}\parallel^{\frac{1}{2}},
\end{array}
$$
Since
$\parallel\textbf{x}_{m_k}\parallel\cdot\parallel\textbf{y}_{m_k}\parallel\to0$,
for any $\varepsilon>0$ there exists a positive integer $k''>0$ such
that $\|\mu_{m_k}\textbf{x}_{m_k}\|<\varepsilon$ and
$\|\mu_{m_k}^{-1}\textbf{y}_{m_k}\|<\varepsilon$ for any $k>k''$.

Finally, we define a sequence $\{\lambda_n\}$ as follows:
$$
\lambda_n= \left\{
\begin{array}{ll}
\nu_{n}, \ \ \textit{if} \ \ n\in(n_k)\\
\mu_{n},  \ \ \textit{if} \ \ n\in(m_k)
\end{array}.
\right.
$$
 Then for any $\varepsilon>0$
one has  $\|\lambda_{n}\textbf{x}_{n}\|<\varepsilon$ and
$\|\lambda_{n}^{-1}\textbf{y}_{n}\|<\varepsilon$ for any
$n>\max\{n_{k'},m_{k''}\}$.

Now, we suppose that value group of $\mathbb K$ is dense. Then we
can find sequences $(\textbf{x}'_{n})$ and $(\textbf{y}'_{n})$ such
that
$$
\|\textbf{x}'_{n}\|>\|\textbf{x}_{n}\|,\ \
\|\textbf{y}'_{n}\|>\|\textbf{y}_{n}\|, \
\|\textbf{x}'_{n}\|\cdot\|\textbf{y}'_{n}\|<\|\textbf{x}_{n}\|\cdot\|\textbf{y}_{n}\|+\frac{1}{n}
$$
 It is clear that
$\|\textbf{x}'_{n}\|\cdot\|\textbf{y}'_{n}\|\to0$ as $n\to\infty$.
Fix a $a\in\mathbb K^\times$ with $|a|>1$ Then there exists a
sequence $(\beta_n)$ such that
$$
|a|^{\beta_{n}}\leq\sqrt{\frac{\parallel\textbf{y}'_{n}\parallel}{\parallel\textbf{x}'_{n}\parallel}}\leq|a|^{\beta_{n}+1}
$$
Define a sequence $\lambda_n:=a^{\beta_n}$. Then we have
$$
\begin{array}{ll}
\|\lambda_n\textbf{x}_{n}\|<\|\lambda_n\textbf{x}'_{n}\|=|a|^{\beta_n}\|\textbf{x}'_n\|\leq\sqrt{\|\textbf{x}'_n\|\cdot\|\textbf{y}'_n\|}\\[3mm]
\|\lambda^{-1}_n\textbf{y}_{n}\|<\|\lambda^{-1}_n\textbf{y}'_{n}\|=|a|^{-\beta_n}\|\textbf{y}'_n\|\leq|a|\sqrt{\|\textbf{x}'_n\|\cdot\|\textbf{y}'_n\|}
\end{array}
$$
Since $\|\textbf{x}'_{n}\|\cdot\|\textbf{y}'_{n}\|\to0$ we get
\eqref{xylambda}. This completes the proof.
\end{proof}

Let $X$ and $Y$ be topological vector spaces over non-Archimedean
valued field $\mathbb K$. By $L(X,Y)$ we denote the set of all
continuous linear operators from $X$ to $Y$. If $X=Y$ then
$L(X,Y)$ is denoted by $L(X)$. In what follows, we use the
following terminology: $T$ is a linear continuous operator on $X$
means that $T\in L(X)$. The $T$-{\it orbit} of a vector
$\textbf{x}\in X$, for some operator $T\in L(X)$, is the set
$$
O(\textbf{x},T):=\{T^n(\textbf{x}); n\in\mathbb Z_+\}.
$$
An operator $T\in L(X)$ is called {\it hypercyclic} if there
exists some vector $\textbf{x}\in X$ such that its $T$-orbit is
dense in $X$. The corresponding vector $\textbf{x}$ is called
$T$-{\it hypercyclic}, and the set of all $T$-hypercyclic vectors
is denoted by $HC(T)$. Similarly, $T$ is called {\it supercyclic}
if there exists a vector $\textbf{x}\in X$ such that whose
projective orbit
$$
\mathbb K\cdot O(\textbf{x},T):=\{\lambda T^n(\textbf{x}); n\in\mathbb Z_+,\ \lambda\in\mathbb K\}
$$
is dense in $X$. The set of all $T$-supercyclic vectors is denoted by $SC(T)$. Finally, we recall that $T$ is called {\it cyclic}
if there exists $\textbf{x}\in X$ such that
$$
\mathbb K[T]\textbf{x}:=\mbox{span}O(\textbf{x},T)=\{P(T)x; P\ \mbox{polynomial}\}
$$
is dense in $X$. The set of all $T$-cyclic vectors is denoted by $CC(T)$.

\begin{rem}\label{remsubset}
We stress that the notion of hypercyclicity makes sense only if
the space $X$ is separable. Note that one has
$$
HC(T)\subset SC(T)\subset CC(T).
$$
\end{rem}

\begin{rem}
Note that if $T$ is a hypercyclic operator on Banach space then
$\|T\|>1$.
\end{rem}

\section{hypercyclicity and supercyclicity of linear operators}

In this section we find sufficient and necessary conditions to
hypercyclicity of linear operators on $F$-spaces. In what follows,
by {\it $F$-space} we mean a topological vector space $X$  which is
metrizable and complete over a non-Archimedean field. Basically,
this section mostly repeats the well-known facts from the dynamics
of linear operators \cite{B}. But for the sake of completeness, we
are going to prove them (with taking into account
non-Archimedeanness of the space). In this section, a main approach
is based on the Baire category theorem.

We start with the well-known equivalence between hypercyclicity and
topological transitivity: an operator $T$ acting on some separable
completely metrizable space $X$ is hypercyclic iff for each pair of
non-empty open sets $(U,V)\in X$, one can find $n\in\mathbb N$ such
that $T^n(U)\cap V\neq\varnothing$; in this case, there is in fact a
residual set of hypercyclic vectors. From this, one gets immediately
the so-called Hypercyclicity Criterion, a set of sufficient
conditions for hypercyclicity with a remarkably wide range of
applications. The analogous Supercyclicity Criterion is proved along
the same lines.

Now we show that hypercyclicity turns out to be a purely
infinite-dimensional phenomenon.

\begin{prop}\label{findim} Let $X\neq\{0\}$ be a finite-dimensional space. Then each operator $T\in L(X)$ is not hypercyclic.
\end{prop}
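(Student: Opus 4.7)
The plan is to assume, for contradiction, that $T \in L(X)$ is hypercyclic with hypercyclic vector $\mathbf{x}_0$, and to derive a contradiction by showing that in finite dimensions the orbit $\{T^k \mathbf{x}_0\}_{k \geq 0}$ is forced to be either bounded or to escape to infinity in norm---either alternative being incompatible with density in the unbounded space $X$. First I would reduce to a cyclic situation: every finite-dimensional subspace of $X$ over the complete field $\mathbb{K}$ is closed, so density of the orbit forces its linear span to equal $X$. Hence $\{\mathbf{x}_0, T\mathbf{x}_0, \ldots, T^{n-1}\mathbf{x}_0\}$ is a basis (with $n = \dim X$), $\mathbf{x}_0$ is cyclic for $T$, and the minimal polynomial of $\mathbf{x}_0$ coincides with the characteristic polynomial $p(z)$ of $T$.

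Next, I would pass to a splitting field $L$ of $p$ over $\mathbb{K}$; the non-Archimedean valuation extends uniquely to $L$, so $X_L := X \otimes_{\mathbb{K}} L$ admits a generalized eigenspace decomposition $X_L = \bigoplus_i V_i$, with $T|_{V_i} = \lambda_i I + N_i$ and $N_i$ nilpotent. Set $r := \max_i |\lambda_i|$. The argument then splits in two. If $r \leq 1$, the binomial expansion
\[
T^k|_{V_i} = \sum_{j \geq 0} \binom{k}{j} \lambda_i^{k-j} N_i^j,
\]
combined with the crucial non-Archimedean identity $|\binom{k}{j}| \leq 1$ for all integers $k \geq j \geq 0$ and the bound $|\lambda_i|^{k-j} \leq 1$, yields that $\|T^k\|$ is bounded uniformly in $k$; the orbit is then bounded and cannot be dense in the unbounded $X$. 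If $r > 1$, I would decompose $\mathbf{x}_0 = \sum_i \mathbf{y}_i$ with $\mathbf{y}_i \in V_i$; cyclicity of $\mathbf{x}_0$ forces every $\mathbf{y}_i \neq 0$, for otherwise $\mathbf{x}_0$ would lie in the proper invariant subspace $\bigoplus_{j \neq i} V_j$. For the dominant index $\ast$ with $|\lambda_\ast| = r$, the restriction $T|_{V_\ast}$ is invertible, its inverse has spectral radius $1/r < 1$, and Gelfand's formula gives $\|(T|_{V_\ast})^{-k}\|^{1/k} \to 1/r$. Consequently $\|T^k \mathbf{y}_\ast\| \to \infty$ at rate essentially $r^k$, and since the subdominant components grow at slower rates (or stay bounded) the ultrametric property forces $\|T^k \mathbf{x}_0\| \to \infty$, contradicting density since the orbit then cannot accumulate at $\mathbf{0}$.

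The main obstacle I anticipate is the $r > 1$ case: one needs to rule out cancellation between the dominant contribution $T^k \mathbf{y}_\ast$ and the contributions of other $V_j$ in the original norm on $X$. This should be handled by the ultrametric inequality applied to the direct-sum decomposition, together with the fact that all norms on a finite-dimensional space over a complete non-Archimedean field are equivalent, so that boundedness/unboundedness conclusions transfer between the abstract norm on $X$ and the coordinate sup norm on $X_L$. Notably, the case $r \leq 1$ is actually cleaner than in the classical (Archimedean) setting, as the bound $|\binom{k}{j}| \leq 1$ collapses what would otherwise be polynomial growth from Jordan blocks into genuine uniform boundedness.
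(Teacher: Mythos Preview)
Your spectral approach is correct, but the paper's proof takes a quite different and shorter route. Rather than extending scalars and decomposing into generalized eigenspaces, the paper uses a determinant trick: once $\{\mathbf{x}_0, T\mathbf{x}_0,\ldots,T^{m-1}\mathbf{x}_0\}$ is a basis, for each $\alpha \in \mathbb{K}^\times$ density of the orbit gives a sequence $(n_k)$ with $T^{n_k}\mathbf{x}_0 \to \alpha\mathbf{x}_0$, hence $T^{n_k}\to \alpha I$ on the whole space and $\det(T)^{n_k}\to\alpha^m$. Thus the powers of $a:=\det(T)$ would have to accumulate at every $m$-th power in $\mathbb{K}^\times$, which is impossible non-Archimedeanly: if $|a|\le 1$ then $\{a^n\}\subset B(0,1)$, while if $|a|>1$ then $\{a^n\}$ stays outside $B(0,1)$. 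Your argument yields more dynamical information (the orbit is literally bounded or escapes to infinity) and showcases the identity $|\binom{k}{j}|\le 1$, at the cost of a field extension and a case analysis; the paper's argument collapses everything to dimension one over $\mathbb{K}$ itself. One small wording issue in your $r>1$ case: invoking ``the ultrametric property'' is slightly misleading, since the strong triangle inequality bounds $\|T^k\mathbf{x}_0\|$ from above, not below; the lower bound you need comes precisely from the equivalence-of-norms observation you make immediately afterward (comparing with the direct-sum max norm on $\bigoplus_i V_i$), and that step already handles the possibility of several eigenvalues sharing the maximal modulus $r$.
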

\begin{proof} Without loss of generality, we may assume that $X=\mathbb K^m$ for some $m\geq 1$.
Now we are going to prove that each operator $T\in L(\mathbb K^m)$
is not hypercyclic. Suppose that a linear operator $T$ on $\mathbb
K^m$ is hypercyclic. Take $\textbf{x}\in HC(T)$. The density of
$O(\textbf{x},T)$ in $\mathbb K^m$ implies that the family
$\{\textbf{x},T(\textbf{x}),\dots,T^{m-1}(\textbf{x})\}$ forms a
linearly independent system. Hence, this collection is a basis of
$\mathbb K^m$. For any $\alpha\in\mathbb K\setminus\{0\}$, one can
find a sequence of integers $(n_k)$ such that
$T^{n_k}(\textbf{x})\to\alpha\textbf{x}$. Then
$T^{n_k}(T^i\textbf{x})=T^i(T^{n_k}\textbf{x})\to\alpha
T^i(\textbf{x})$ for each $i<m$. Hence for any
$\textbf{y}\in\mathbb K^m$ we obtain $T^{n_k}(\textbf{y})\to\alpha
\textbf{y}$ which yields that $\det(T^{n_k})\to\alpha^m$, i.e.
$\det(T)^{n_k}\to\alpha^m$. Thus putting $a:=\det(T)$, we have
the set $\{a^n; n\in\mathbb N\}$ is dense in $\mathbb
K\setminus\{0\}$. But it is impossible. Indeed, it is clear that
$|a^n-z|>1$ for any $z\in\mathbb K\setminus B(0,1)$ if $|a|\leq1$
and $|a^n-w|>1$ for any $w\in B(0,1)$ if $|a|>1$.
\end{proof}
Our first characterization of hypercyclicity is a direct application of the Baire category
theorem.

\begin{thm}$($\textsc{Transitivity theorem}$)$\label{Btthm} Let
$X$ be a separable $F$-space and $T\in L(X)$. The following
statements are equivalent:
\begin{enumerate}
\item[(i)] $T$ is hypercyclic;

\item[(ii)] $T$ is {\bf topologically transitive}; that is, for
each pair of non-empty open sets $(U,V)\subset X$ there exists
$n\in\mathbb N$ such that $T^n(U)\cap V\neq\varnothing$.
\end{enumerate}
\end{thm}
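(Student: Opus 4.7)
The plan is to prove the two implications separately, with (i)$\Rightarrow$(ii) being an essentially trivial unpacking of the definition and (ii)$\Rightarrow$(i) relying on the Baire category theorem, exactly as in the classical Birkhoff transitivity argument. Since the statement concerns only topological notions (orbits, open sets, density) and since an $F$-space is by definition a complete metric topological vector space, the non-Archimedean character of $\mathbb{K}$ plays no role here; the proof transfers verbatim from the real/complex setting.

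For (i)$\Rightarrow$(ii), I would pick any $\mathbf{x}\in HC(T)$ and any pair of non-empty open sets $U,V\subset X$. By density of $O(\mathbf{x},T)$ there exists $k\in\mathbb{Z}_+$ with $T^k\mathbf{x}\in U$, and then, again by density, some $j>k$ with $T^j\mathbf{x}\in V$. Setting $n:=j-k$ gives $T^k\mathbf{x}\in U\cap T^{-n}(V)$, whence $T^n(U)\cap V\neq\varnothing$.

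For (ii)$\Rightarrow$(i), I would use that $X$ is separable and metrizable to fix a countable base $(V_k)_{k\geq 1}$ of its topology, and then describe the set of hypercyclic vectors explicitly as
\begin{equation*}
HC(T)=\bigcap_{k\geq 1}\bigcup_{n\geq 0} T^{-n}(V_k).
\end{equation*}
Each $T^{-n}(V_k)$ is open by continuity of $T$, so the sets $G_k:=\bigcup_{n\geq 0}T^{-n}(V_k)$ are open. The key observation is that topological transitivity is exactly what makes each $G_k$ dense: given any non-empty open $U$, by (ii) one finds $n$ with $T^n(U)\cap V_k\neq\varnothing$, i.e.\ $U\cap T^{-n}(V_k)\neq\varnothing$, hence $U\cap G_k\neq\varnothing$. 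Since $X$ is an $F$-space, it is completely metrizable, and the Baire category theorem applies: the countable intersection $HC(T)$ is a dense $G_\delta$, in particular non-empty, and any $\mathbf{x}$ in it has orbit meeting every basic open set $V_k$, hence dense orbit.

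There is really no main obstacle: the only points where one must pay attention are (a) the existence of a countable base, which follows from separability plus metrizability of $F$-spaces, and (b) the applicability of Baire, which follows from completeness. Both are built into the definition of $F$-space adopted in the paper, and neither is sensitive to the non-Archimedean nature of the norm. The same Baire-category argument will also yield, as a free bonus, the residuality statement mentioned in the paragraph preceding the theorem, which will be useful later when one deduces the Hypercyclic Criterion.
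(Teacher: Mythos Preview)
Your proposal is correct and follows essentially the same Baire-category approach as the paper: the formula $HC(T)=\bigcap_{k\geq 1}\bigcup_{n\geq 0}T^{-n}(V_k)$ and the appeal to completeness are identical for (ii)$\Rightarrow$(i), and your (i)$\Rightarrow$(ii) is a minor variant of the paper's (the paper first observes that $HC(T)$ is dense and picks a hypercyclic vector directly in $U$, whereas you pick one hypercyclic vector and hit $U$ then $V$ along its orbit). The only small point to make explicit in your version is why one can take $j>k$: this uses that tails of a dense orbit remain dense, which in turn needs that $X$ has no isolated points---exactly the observation the paper invokes.
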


\begin{proof}
(i) Assume $HC(T)\neq\varnothing$. Since $X$ has no isolated
points, for any $k\in\mathbb N$ it is easy to see that
$T^k(\textbf{x})\in HC(T)$ if and only if $\textbf{x}\in HC(T)$.
Let $U,V$ be open sets in $X$. Take $\textbf{x}\in U\cap HC(T)$.
Then there exists a number $n\in\mathbb N$ such that
$T^n(\textbf{x})\in V$. This means that $T$ is topologically
transitive.

(ii) Let $T$ be topologically transitive and $\{V_k\}_{k\in\mathbb
N}$ be a countable basis of open sets on $X$ (this kind of basis
exists since $X$ is a separable $F$-space). Then from the
topological transitivity of $T$, for any $k\geq1$ and non-empty open
set $U\subset X$ there exists an $n$ such that $U\cap
T^{-n}(V_k)\neq\varnothing$. This means that each open set
$\bigcup\limits_{n\geq0}T^{-n}(V_k)$ is dense, hence one gets the
density of
$\bigcap\limits_{k\geq1}\bigcup\limits_{n\geq0}T^{-n}(V_k)$. On the
other hand, we have
\begin{equation}\label{HC(T)}
HC(T)=\bigcap_{k\geq1}\bigcup_{n\geq0}T^{-n}(V_k).
\end{equation}
Consequently, $HC(T)\neq\varnothing$. This completes the proof.
\end{proof}

\begin{cor}\label{corBt}
Let
$X$ be a separable $F$-space and $T\in L(X)$. If $T$ is hypercyclic then $HC(T)$ is $G_\delta$ set.
\end{cor}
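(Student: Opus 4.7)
The plan is to observe that the proof of the implication (ii) $\Rightarrow$ (i) in Theorem \ref{Btthm} already exhibits $HC(T)$ as a countable intersection of open sets, so the corollary requires almost nothing beyond unpacking that formula.

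First, I would fix a countable basis $\{V_k\}_{k\geq1}$ of the topology of $X$, which exists because $X$ is a separable $F$-space (hence second countable, being a separable metrizable space). Next, I would justify the identity
\begin{equation*}
HC(T)=\bigcap_{k\geq1}\bigcup_{n\geq0}T^{-n}(V_k),
\end{equation*}
namely the statement already used as equation \eqref{HC(T)}. A vector $\textbf{x}$ lies in $HC(T)$ iff its orbit $O(\textbf{x},T)$ meets every non-empty open set, iff it meets every $V_k$ (since $\{V_k\}$ is a basis), iff for each $k$ there is some $n\geq0$ with $T^n(\textbf{x})\in V_k$, i.e.\ $\textbf{x}\in T^{-n}(V_k)$.

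Having established this formula, the conclusion is routine: since $T$ is continuous, each $T^{-n}(V_k)$ is open in $X$; consequently $\bigcup_{n\geq0}T^{-n}(V_k)$ is open for every $k$, and the countable intersection over $k\geq1$ is a $G_\delta$ set by definition. No hypercyclicity hypothesis is actually needed to see that the set on the right is $G_\delta$; the hypothesis only guarantees that this $G_\delta$ is non-empty (indeed, as shown in the theorem, residual). There is essentially no obstacle: the work was already done inside Theorem \ref{Btthm}, and this corollary is simply a reformulation of formula \eqref{HC(T)} in descriptive-set-theoretic language, valid equally well in the non-Archimedean setting because continuity of $T$ and second-countability of $X$ are the only ingredients.
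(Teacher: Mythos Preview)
Your proposal is correct and follows essentially the same approach as the paper: the paper's proof simply invokes Theorem \ref{Btthm} and then points to formula \eqref{HC(T)}, which is exactly the countable-intersection-of-open-sets representation you unpack in detail. Your additional remark that the hypercyclicity hypothesis is not actually needed for the $G_\delta$ conclusion (only for non-emptiness) is a valid observation that the paper does not make explicit.
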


\begin{proof}
According to Theorem \ref{Btthm} hypercyclicity of $T$ implies its
topological transitivity. From \eqref{HC(T)} one easily sees that
$HC(T)$ is $G_\delta$ set.
\end{proof}

\begin{defin} Let $X$ be a topological vector space, and let $T\in L(X)$. It is said that $T$ satisfies the
{\bf Hypercyclic Criterion} if there exist an increasing sequence of integers $(n_k)$, two dense sets
$\mathcal D_1,\mathcal D_2\subset X$ and a sequence of maps $S_{n_k}:\mathcal D_2\to X$ such that:
\begin{enumerate}
\item[(1)]  $T^{n_k}(\textbf{x})\to\textbf{0}$ for any
$\textbf{x}\in\mathcal D_1$;

\item[(2)] $S_{n_k}(\textbf{y})\to\textbf{0}$ for any
$\textbf{y}\in\mathcal D_2$;

\item[(3)] $T^{n_k}S_{n_k}(\textbf{y})\to\textbf{y}$ for any
$\textbf{y}\in\mathcal D_2$.
\end{enumerate}
\end{defin}

Note that in the above definition the maps $S_{n_k}$ are not
assumed to be continuous or linear. We will sometimes say that $T$
satisfies the Hypercyclic Criterion with respect to the sequence
$(n_k)$. When it is possible to take $n_k=k$ and $\mathcal
D_1=\mathcal D_2$, it is usually said that $T$ satisfies {\it
Kitai's Criterion} \cite{K}.

\begin{thm}\label{hcthm}
Let $T\in L(X)$, where $X$ is a separable $F$-space. Assume that $T$ satisfies the Hypercyclic
Criterion. Then $T$ is hypercyclic.
\end{thm}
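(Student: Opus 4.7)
The plan is to invoke the Transitivity Theorem (Theorem \ref{Btthm}) and reduce the goal to showing that $T$ is topologically transitive, i.e.\ that for every pair of non-empty open sets $U,V \subset X$ there is some $n$ with $T^n(U) \cap V \neq \varnothing$. Since $X$ is a separable $F$-space, this equivalence applies directly, and transitivity is typically much easier to verify than producing a hypercyclic vector by hand.

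Fix non-empty open $U,V \subset X$. Using the density hypotheses from the Hypercyclic Criterion, I would choose $\textbf{x} \in U \cap \mathcal{D}_1$ and $\textbf{y} \in V \cap \mathcal{D}_2$. The key construction is the sequence
\[
\textbf{x}_k := \textbf{x} + S_{n_k}(\textbf{y}).
\]
Condition (2) gives $S_{n_k}(\textbf{y}) \to \textbf{0}$, so by continuity of addition $\textbf{x}_k \to \textbf{x} \in U$, whence $\textbf{x}_k \in U$ for all large $k$. On the other hand, applying $T^{n_k}$ and using linearity,
\[
T^{n_k}(\textbf{x}_k) = T^{n_k}(\textbf{x}) + T^{n_k}S_{n_k}(\textbf{y}),
\]
and conditions (1) and (3) respectively force the two terms to tend to $\textbf{0}$ and $\textbf{y}$. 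Hence $T^{n_k}(\textbf{x}_k) \to \textbf{y} \in V$, so $T^{n_k}(\textbf{x}_k) \in V$ for all large $k$. For any such $k$ we have $T^{n_k}(\textbf{x}_k) \in T^{n_k}(U) \cap V$, establishing topological transitivity and hence hypercyclicity by Theorem \ref{Btthm}.

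I do not foresee a serious obstacle here: the argument only uses that $X$ is a topological vector space (continuity of addition, continuity of each $T^{n_k}$, openness of $U$ and $V$), and not any specific feature of the non-Archimedean structure. The one point that deserves a brief remark is that the maps $S_{n_k}$ are not assumed linear or continuous, but this is irrelevant: we only evaluate them at a single fixed vector $\textbf{y} \in \mathcal{D}_2$, and the perturbation $\textbf{x} + S_{n_k}(\textbf{y})$ still converges to $\textbf{x}$ simply because the values $S_{n_k}(\textbf{y})$ go to zero in $X$. Thus the proof is essentially a two-line reduction to the Transitivity Theorem, with no category-theoretic complication beyond what is already packaged there.
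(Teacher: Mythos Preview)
Your proof is correct and follows essentially the same approach as the paper: reduce to topological transitivity via Theorem~\ref{Btthm}, pick $\textbf{x}\in\mathcal{D}_1\cap U$ and $\textbf{y}\in\mathcal{D}_2\cap V$, and use the perturbation $\textbf{x}+S_{n_k}(\textbf{y})$ together with conditions (1)--(3) to show $T^{n_k}(U)\cap V\neq\varnothing$ for large $k$. Your additional remarks on why neither linearity nor continuity of $S_{n_k}$ is needed are accurate and do not depart from the paper's argument.
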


\begin{proof}
According to the Transitivity Theorem it is enough to show that $T$
is topologically transitive. Let $U,V$ be two non-empty open subsets
of $X$. Take $\textbf{x}\in\mathcal D_1\cap U,\ y\in\mathcal D_2\cap
V$. Then $\textbf{x}+S_{n_k}(\textbf{y})\to\textbf{x}\in U$ as
$k\to\infty$. Due to the linearity and the continuity of $T^{n_k}$
we obtain
$T^{n_k}(\textbf{x}+S_{n_k}(\textbf{y}))=T^{n_k}(\textbf{x})+T^{n_k}S_{n_k}(\textbf{y})\to\textbf{y}\in
V$. Hence, for sufficiently large $k$ one gets $T^{n_k}(U)\cap
V\neq\varnothing$. The proof is complete.
\end{proof}

\begin{defin}
Let $T_0:X_0\to X_0$ and $T:X\to X$ be two continuous maps acting on
topological spaces $X_0$ and $X$. The map is said to be a {\bf
quasi-factor} of $T_0$ if there exists a continuous map with dense
range $J:X_0\to X$ such that $TJ=JT_0$. When this can be achieved
with a homeomorphism $J:X_0\to X$, we say that $T_0$ and $T$ are
{\bf topological conjugate}. Finally, when $T_0\in L(X_0)$ and $T\in
L(X)$ and the factoring map (resp. the homeomorphism) $J$ can be
taken as linear, we say that $T$ is a {\bf linear quasi-factor} of
$T_0$ (resp. that $T_0$ and $T$ are {\bf linearly conjugate}).
\end{defin}

The usefulness and importance of these definitions it can be seen in
the following

\begin{lem}\label{hcp} Let $T_0\in L(X_0)$ and $T\in L(X)$. Assume that there exists
a continuous map with dense range  $J:X_0\to X$ such that $TJ=JT_0$. Then the following statements
are satisfied:
\begin{enumerate}
\item[(1)] Hypercyclicity of $T_0$ implies hypercyclicity of $T$;

\item[(2)] Let $J$ be a homeomorphism. If $T_0$ satisfies
Hypercyclic criterion then $T$ satisfies Hypercyclic criterion;

\item[(3)] Let $J$ is linear homeomorphism. Then $T$ is
hypercyclic iff $T_0$ is hypercyclic.
\end{enumerate}
\end{lem}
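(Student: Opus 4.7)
The plan is to exploit the intertwining relation $TJ=JT_0$, which by iteration yields $T^nJ=JT_0^n$ for every $n\in\mathbb Z_+$. This identity lets us transport dynamical data (orbits, and later the witnesses of the Hypercyclic Criterion) from $X_0$ to $X$ through $J$. The three assertions then follow from progressively stronger hypotheses on $J$: dense range alone suffices for (1); bijectivity is needed in (2) to define the counterpart of $S_{n_k}$ on $X$; and linearity plus invertibility in (3) reduces the equivalence to two symmetric applications of (1).

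For (1), I would take $\textbf{x}_0\in HC(T_0)$ and verify that $J\textbf{x}_0\in HC(T)$. By the intertwining identity,
\[
O(J\textbf{x}_0,T)=\{T^nJ\textbf{x}_0:n\in\mathbb Z_+\}=J\bigl(O(\textbf{x}_0,T_0)\bigr).
\]
Continuity of $J$ gives $J(\overline{O(\textbf{x}_0,T_0)})\subseteq\overline{J(O(\textbf{x}_0,T_0))}$, so the closure of the $T$-orbit of $J\textbf{x}_0$ contains $J(X_0)$, which is dense in $X$ by hypothesis. The only real ingredient here is the elementary fact that a continuous map with dense range sends dense sets to dense sets.

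For (2), since $J$ is a homeomorphism, the sets $\mathcal D_1':=J(\mathcal D_1)$ and $\mathcal D_2':=J(\mathcal D_2)$ are dense in $X$, and the map $S'_{n_k}:=J\circ S_{n_k}\circ J^{-1}:\mathcal D_2'\to X$ is well-defined. The three conditions of the Hypercyclic Criterion for $T$ along the sequence $(n_k)$ then follow directly from the corresponding conditions for $T_0$, using $T^{n_k}J=JT_0^{n_k}$ and continuity of $J$ at $\textbf{0}$ to convert $T_0^{n_k}\textbf{x}\to\textbf{0}$, $S_{n_k}(\textbf{y})\to\textbf{0}$, and $T_0^{n_k}S_{n_k}(\textbf{y})\to\textbf{y}$ into their primed counterparts. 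This step is routine, but it is essential that $J^{-1}$ exist to define $S'_{n_k}$ consistently; the construction would break down if $J$ were only continuous with dense range.

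Finally, (3) is obtained by applying (1) symmetrically. If $J$ is a linear homeomorphism, then $J^{-1}:X\to X_0$ is also a continuous linear map, surjective (hence with dense range), and $TJ=JT_0$ rearranges to $T_0J^{-1}=J^{-1}T$. Applying (1) to $(T_0,T,J)$ gives the forward implication, and applying it to $(T,T_0,J^{-1})$ gives the reverse. The principal obstacle across all three parts is to make sure both density and the intertwining are preserved when passing between $X_0$ and $X$; once the closure computation in (1) is pinned down, the rest is bookkeeping.
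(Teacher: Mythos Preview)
Your proposal is correct and follows essentially the same approach as the paper: in (1) you transport the orbit via $O(J\textbf{x}_0,T)=J(O(\textbf{x}_0,T_0))$ and use dense range of $J$; in (2) you push forward the Hypercyclic Criterion data by setting $\mathcal D_i'=J(\mathcal D_i)$ and $S'_{n_k}=J S_{n_k} J^{-1}$, exactly as the paper does with its $\tilde S_{n_k}$; and in (3) the paper simply declares the statement obvious, whereas you spell out the symmetric application of (1) via $J^{-1}$, which is presumably what the authors intend.
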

\begin{proof}
(1) Since $TJ=JT_0$ it is readily to see that $O(J(\textbf{x}_0),T)=J(O(\textbf{x}_0,T_0))$ for any $\textbf{x}_0\in X_0$.
From this and since density of $\mbox{Ran}(J)$ one gets $J(\textbf{x})\in HC(T)$ if $\textbf{x}\in HC(T_0)$.

(2) Now we assume that $T_0$ satisfies Hypercyclic Criterion. Then
$J(\mathcal D_1)$ and $J(\mathcal D_2)$ both are dense sets in $X$
since $J$ has a dense range. For all $\textbf{x}=J(\textbf{x}_0)\in
J(\mathcal D_1)$ we have
$$
T^{n_k}(\textbf{x})=T^{n_k}J(\textbf{x}_0)=JT_0^{n_k}(\textbf{x}_0).
$$
The continuity of $J$ implies that
$T^{n_k}(\textbf{x})\to\textbf{0}$. Denoting by
$\tilde{S}_{n_k}:=JS_{n_k}J^{-1}$,  for every $\textbf{y}\in
J(\mathcal D_2)$ one finds
$$
T^{n_k}\tilde{S}_{n_k}(\textbf{y})=JT_0^{n_k}S_{n_k}J^{-1}(\textbf{y})=JJ^{-1}(\textbf{y})=\textbf{y}
$$
and
$$
\tilde{S}_{n_k}(\textbf{y})=JS_{n_k}J^{-1}(\textbf{y})\to\textbf{0}.
$$
Thus, we have shown that $T$ satisfies Hypercyclic criterion.

Proof of (3) is obvious.
\end{proof}

\begin{rem} Note that if $T\in L(X)$ is hypercyclic and if $J\in L(X)$ has a dense range
and $JT=TJ$ then $HC(T)$ is invariant under $J$.
\end{rem}

We have already observed that if $T$ is a hypercyclic operator on
some $F$-space $X$ then $HC(T)$ is dense $G_\delta$ set of $X$. It
shows that the set $HC(T)$ is large in a topological sense. This
implies largeness in an algebraic sense.

\begin{prop}
Let $T\in L(X)$ be hypercyclic on the separable $F$-space $X$. Then for every $\textbf{x}\in X$
there exist $\textbf{y},\textbf{z}\in HC(T)$ such that $\textbf{x}=\textbf{y}+\textbf{z}$.
\end{prop}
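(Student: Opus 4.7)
The plan is to realize the statement as a direct consequence of the Baire category theorem, much as in the classical (real/complex) case; the non-Archimedean nature of the field plays no role here, since all we use is that $X$ is a completely metrizable topological vector space and that $HC(T)$ is a dense $G_\delta$ subset.

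First I would record the key input: by Theorem \ref{Btthm}, hypercyclicity of $T$ is equivalent to topological transitivity, and in particular $HC(T)$ is dense in $X$ (for any $\textbf{x}\in HC(T)$ the orbit $O(\textbf{x},T)$ is already dense, and the orbit is contained in $HC(T)$ since $T^k(HC(T))\subset HC(T)$). Combined with Corollary \ref{corBt}, this gives that $HC(T)$ is a dense $G_\delta$ subset of the complete metrizable space $X$.

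Next, fix $\textbf{x}\in X$ and consider the translation-reflection map $\Phi:X\to X$ defined by $\Phi(\textbf{y})=\textbf{x}-\textbf{y}$. Since $X$ is a topological vector space, $\Phi$ is a homeomorphism of $X$ onto itself, so $\Phi(HC(T))=\textbf{x}-HC(T)$ is again a dense $G_\delta$ subset of $X$. The Baire category theorem applied to the complete metric space $X$ then yields that
\[
A:=HC(T)\cap\bigl(\textbf{x}-HC(T)\bigr)
\]
is a dense $G_\delta$ set, and in particular nonempty. Picking any $\textbf{y}\in A$ and setting $\textbf{z}:=\textbf{x}-\textbf{y}$, both $\textbf{y}$ and $\textbf{z}$ lie in $HC(T)$ and by construction $\textbf{x}=\textbf{y}+\textbf{z}$.

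There is really no serious obstacle here; the only thing to double-check is that $X$ is a Baire space, which is immediate from $F$-space being complete and metrizable, and that translation by $\textbf{x}$ is a homeomorphism, which is part of the definition of a topological vector space. Everything else (denseness of $HC(T)$, the $G_\delta$ property) has already been proved earlier in the paper.
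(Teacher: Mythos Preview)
Your proof is correct and follows essentially the same approach as the paper: both arguments use that $HC(T)$ is a dense $G_\delta$ set (Corollary~\ref{corBt}) and then invoke the Baire category theorem to conclude that $HC(T)\cap(\textbf{x}-HC(T))\neq\varnothing$. The paper phrases this via first-category complements while you phrase it via intersecting two dense $G_\delta$ sets, but these are equivalent formulations of the same idea; if anything, your version is a bit more explicit about why $HC(T)$ is dense and why $\textbf{x}-HC(T)$ inherits the dense $G_\delta$ property.
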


\begin{proof} According to Corollary \ref{corBt} $HC(T)$ is $G_\delta$ set. It follows
that $X\setminus{HC(T)}$ and $X\setminus(\textbf{x}-HC(T))$ are the
first category sets. Then by Baire category
theorem they have non-empty intersection.
\end{proof}

We say that a linear subspace $E\subset X$ is a {\it hypercyclic
manifold} for $T$ if $E\setminus\{\textbf{0}\}$ consists of
entirely of hypercyclic vectors.

\begin{lem}\label{Poly}
Let $T\in L(X)$ and $E\subset X$ be a closed $T$-invariant
subspace. Then either $E=X$ or $E$ has infinite codimension in
$X$.
\end{lem}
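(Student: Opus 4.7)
The plan is to argue by contradiction and reduce to Proposition~\ref{findim}, which forbids hypercyclicity on any nonzero finite-dimensional space. The statement is understood for a hypercyclic operator $T$, consistent with the surrounding discussion of hypercyclic manifolds. Suppose, toward a contradiction, that $E$ is closed, $T$-invariant, $E\neq X$, and that $\dim(X/E)=m<\infty$.

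First I would construct the quotient dynamics. Since $E$ is closed, the quotient $X/E$ carries a Hausdorff topological vector space structure over $\mathbb K$. Being finite-dimensional over the complete non-Archimedean field $\mathbb K$, it is linearly homeomorphic to $\mathbb K^m$ with its product topology (a standard fact in non-Archimedean functional analysis; see \cite{Sch}). The inclusion $T(E)\subset E$ makes the formula $\widetilde T(\textbf{x}+E):=T(\textbf{x})+E$ a well-defined continuous linear operator on $X/E$, intertwined with $T$ by the canonical projection $q:X\to X/E$, namely $q\circ T=\widetilde T\circ q$.

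To derive the contradiction I would project a hypercyclic vector of $T$ to $X/E$. Pick $\textbf{x}\in HC(T)$. Since $q$ is continuous and surjective, it sends dense subsets of $X$ to dense subsets of $X/E$, whence
\[
q\bigl(O(\textbf{x},T)\bigr)=\{\widetilde T^n(q(\textbf{x})):n\in\mathbb Z_+\}=O(q(\textbf{x}),\widetilde T)
\]
is dense in $X/E$, so $q(\textbf{x})\in HC(\widetilde T)$. But $X/E\cong\mathbb K^m$ is a nonzero finite-dimensional space, contradicting Proposition~\ref{findim}. Hence either $E=X$ or the codimension of $E$ is infinite.

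The main technical obstacle I anticipate is the non-Archimedean subtlety that a finite-dimensional Hausdorff topological vector space over $\mathbb K$ is linearly homeomorphic to $\mathbb K^m$; this is the input that lets Proposition~\ref{findim} be applied to $X/E$ even though that proposition is literally stated only for $\mathbb K^m$. This isomorphism is standard for complete, non-trivially valued $\mathbb K$, which is the setting throughout the paper. Once this is in hand, the well-definedness and continuity of $\widetilde T$ follow at once from $T$-invariance and continuity of $T$, and the contradiction is immediate.
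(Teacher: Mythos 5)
Your proof is correct and follows essentially the same route as the paper: pass to the quotient $X/E$ by the closed $T$-invariant subspace, observe that the induced operator on $X/E$ is intertwined with $T$ via the (continuous, surjective) quotient map so that hypercyclicity passes to the quotient, and contradict Proposition~\ref{findim}. The only cosmetic difference is that you verify the transfer of hypercyclicity directly (dense orbits map to dense orbits under $q$) and make explicit the identification of the finite-dimensional quotient with $\mathbb K^m$, whereas the paper cites its quasi-factor Lemma~\ref{hcp} for the same step.
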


\begin{proof}
Assume that $\mbox{dim}(X/E)<\infty$. Let $q:X\to X/E$ be the canonical quotient map.
Since $T$-invariance of $E$ we get $\mbox{Ker}(q)\subset\mbox{Ker}(qT)$. Therefore, one
can find an operator $A\in L(X/E)$ such that $Aq=qT$. Since $q$ is continuous onto, the operator
$A$ is a quasi-factor of $T$. According to the Lemma \ref{hcp} $A$ is hypercyclic on $X/E$.
Since $\mbox{dim}(X/E)<\infty$ by Proposition \ref{findim}, it follows that $X/E=\{\textbf{0}\}$, i.e. $E=X$.
\end{proof}

\begin{lem}\label{Poly1}
Let $T\in L(X)$ be hypercyclic. For any non-zero polynomial $P$,
the operator $P(T)$ has a dense range.
\end{lem}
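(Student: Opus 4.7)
The plan is to show that the closure $E := \overline{\mathrm{Ran}(P(T))}$ must equal $X$, by ruling out the alternative via the dichotomy provided by Lemma \ref{Poly}.

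First, I would observe that $E$ is a closed $T$-invariant subspace of $X$. Invariance is immediate because $T$ commutes with $P(T)$: for any $\textbf{x} \in X$, $T(P(T)\textbf{x}) = P(T)(T\textbf{x}) \in \mathrm{Ran}(P(T))$, and passing to the closure preserves invariance by continuity of $T$. Now Lemma \ref{Poly} gives the dichotomy: either $E = X$ (which is what we want) or $E$ has infinite codimension in $X$.

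Next, I would argue by contradiction: assume $E \neq X$, so $E$ has infinite codimension. Let $q \colon X \to X/E$ be the canonical quotient map; this is a continuous linear surjection, hence has dense range. Since $T(E) \subset E$, there is a well-defined continuous operator $\tilde{T} \in L(X/E)$ satisfying $\tilde{T} q = q T$. By Lemma \ref{hcp}(1), $\tilde{T}$ is hypercyclic on the separable $F$-space $X/E$. On the other hand, $P(T)(X) \subset E = \ker q$, so $q P(T) = 0$, and composing the identity $\tilde{T} q = q T$ yields $P(\tilde{T}) q = q P(T) = 0$; surjectivity of $q$ forces $P(\tilde{T}) = 0$.

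The contradiction will come from the algebraic constraint $P(\tilde{T}) = 0$: if $d = \deg P \geq 1$, then for every $\tilde{\textbf{x}} \in X/E$ the orbit $\{\tilde{T}^n \tilde{\textbf{x}} : n \geq 0\}$ lies in the linear span of $\{\tilde{\textbf{x}}, \tilde{T}\tilde{\textbf{x}}, \ldots, \tilde{T}^{d-1}\tilde{\textbf{x}}\}$, which is at most $d$-dimensional. Since $\dim(X/E) = \infty$ (that is what ``infinite codimension'' means), this orbit cannot be dense, contradicting the hypercyclicity of $\tilde{T}$. Hence $E = X$, i.e.\ $P(T)$ has dense range. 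The only step requiring a little care is verifying that the induced operator $\tilde{T}$ is well-defined, continuous, and inherits hypercyclicity from $T$ via Lemma \ref{hcp}, but all of this is routine given the results already established; there is no real obstacle.
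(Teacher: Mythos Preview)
Your proof is correct, and it rests on the same core objects as the paper's: the closed $T$-invariant subspace $E=\overline{\mathrm{Ran}(P(T))}$, the quotient $X/E$, and the dichotomy of Lemma~\ref{Poly}. The difference is in organization. The paper proceeds directly: taking $\textbf{x}\in HC(T)$ and using the division algorithm, it observes that
\[
\mathbb{K}[T]\textbf{x}\subset \mathrm{Ran}(P(T))+\mathrm{span}\{T^i\textbf{x}:i<\deg P\},
\]
so the image of $\mathbb{K}[T]\textbf{x}$ in $X/E$ has dimension at most $\deg P$; since $\textbf{x}$ is cyclic this forces $\dim(X/E)<\infty$, and then Lemma~\ref{Poly} yields $E=X$. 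You instead invoke Lemma~\ref{Poly} first to reduce to the infinite-codimension case, push hypercyclicity down to $\tilde T$ via Lemma~\ref{hcp}, and derive a contradiction from the algebraic identity $P(\tilde T)=0$. The paper's route is marginally more economical---it only needs cyclicity of $\textbf{x}$ rather than hypercyclicity of $\tilde T$, and avoids a second appeal to Lemma~\ref{hcp}---but your version is equally valid and has the virtue of making the obstruction $P(\tilde T)=0$ explicit.
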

\begin{proof}
Let $P$ be a non-zero polynomial and
$E:=\overline{\mbox{Ran}(P(T))}$. For any $\textbf{x}\in E$ there
exists a sequence $(\textbf{x}_n)\subset X$ such that
$P(T)\textbf{x}_n\to\textbf{x}$. Then from
$P(T)T(\textbf{x}_n)=TP(T)\textbf{x}_n\to T(\textbf{x})\in E$ we
conclude that $E$ is $T$-invariant. Hence, by Lemma \ref{Poly} it
is enough to show that $\mbox{dim}(X/E)<\infty$.

Let $\textbf{x}\in HC(T)$ and $q:X\to X/E$ be the canonical quotient map.
By the division algorithm and the commutativity of the algebra $\mathbb K[T]$, one can easily see that
$$
\mathbb K[T]\textbf{x}\subset\mbox{Ran}(P(T))+\mbox{span}\{T^i(\textbf{x}): i<\deg(P)\}.
$$
From this we have $q(\mathbb K[T]\textbf{x})$ is
finite-dimensional. Since the cyclicity of $\textbf{x}$ one has
$X/E=q(X)$ is finite-dimensional.
\end{proof}

\begin{thm}
Let $X$ be a topological vector space, and $T\in L(X)$ be hypercyclic.
If $\textbf{x}\in HC(T)$ then $\mathbb K[T]\textbf{x}$ is a hypercyclic manifold for $T$.
In particular, $T$ admits a dense hypercyclic manifold.
\end{thm}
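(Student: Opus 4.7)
The plan is to derive this theorem as a direct corollary of Lemma \ref{Poly1} combined with the invariance observation recorded in the Remark immediately preceding. A generic vector in $\mathbb K[T]\textbf{x}$ has the form $P(T)\textbf{x}$ for some polynomial $P\in\mathbb K[t]$, and is nonzero precisely when $P(T)\textbf{x}\neq\textbf{0}$; hence the substantive task is to show that for every \emph{nonzero} polynomial $P$ one has $P(T)\textbf{x}\in HC(T)$.

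To do this, fix a nonzero polynomial $P$ and set $J:=P(T)\in L(X)$. Two properties of $J$ will be exploited: first, $J$ commutes with $T$, both being polynomials in $T$; second, $J$ has dense range by Lemma \ref{Poly1}. The Remark just before the theorem asserts that any such $J$ leaves $HC(T)$ invariant, so $P(T)\textbf{x}=J\textbf{x}\in HC(T)$. In particular $P(T)\textbf{x}\neq\textbf{0}$ (since $\textbf{0}\notin HC(T)$), which simultaneously shows $\mathbb K[T]\textbf{x}\setminus\{\textbf{0}\}\subset HC(T)$, the definition of a hypercyclic manifold for $T$.

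For the ``in particular'' clause, one observes that $\mathbb K[T]\textbf{x}=\mathrm{span}\,O(\textbf{x},T)\supset O(\textbf{x},T)$, and the orbit $O(\textbf{x},T)$ is dense in $X$ since $\textbf{x}\in HC(T)$; hence $\mathbb K[T]\textbf{x}$ is a dense subspace, giving the required dense hypercyclic manifold. There is essentially no remaining obstacle at the level of this theorem: the real content has been packaged into Lemma \ref{Poly1}. If one wishes to spell out the preceding Remark in passing, the argument is short: by commutativity $O(J\textbf{x},T)=J\bigl(O(\textbf{x},T)\bigr)$, and the image under a continuous map with dense range of a dense set is again dense (given any nonempty open $U\subset X$, the preimage $J^{-1}(U)$ is open and nonempty, hence meets the dense orbit).
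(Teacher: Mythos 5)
Your proposal is correct and follows essentially the same route as the paper: both reduce the theorem to Lemma \ref{Poly1} (dense range of $P(T)$ for nonzero $P$) together with the fact that a continuous operator commuting with $T$ and having dense range maps $HC(T)$ into itself, the paper citing Lemma \ref{hcp} where you invoke (and correctly justify) the preceding Remark, which is the same fact. The density of $\mathbb K[T]\textbf{x}$ via $O(\textbf{x},T)\subset\mathbb K[T]\textbf{x}$ also matches the paper's argument.
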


\begin{proof}
Let $\textbf{x}\in HC(T)$ and $P$ be non-zero polynomial. According to the Lemma \ref{Poly1} operator
$P(T)$ has dense range and it commutes with $T$. By the Lemma \ref{hcp} one can gets $P(T)\textbf{x}\in HC(T)$.
This means that $\mathbb K[T]$ is a hypercyclic manifold for $T$. Density of
$\mathbb K[T]$ follows from $O(\textbf{x},T)\subset\mathbb K[T]$.
\end{proof}
We now turn to the supercyclic analogues of Theorems \ref{Btthm} and \ref{hcthm}.

\begin{thm}\label{Btthm2}
Let $X$ be a separable $F$-space, and $T\in L(X)$. The following
statements are equivalent:
\begin{enumerate}
\item[(i)] $T$ is supercyclic;

\item[(ii)] For each pair of non-empty open sets $(U,V)\subset X$
there exist $n\in\mathbb N$ and $\lambda\in\mathbb K$ such that
$\lambda T^n(U)\cap V\neq\varnothing$.
\end{enumerate}
\end{thm}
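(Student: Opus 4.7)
The plan is to mimic the Baire-category argument used for Theorem \ref{Btthm}, systematically replacing orbits by projective orbits and building scalar multiplication into the topological construction. Both implications go through with essentially the same scheme, with the only new ingredient being an appropriate ``saturation'' of open sets under scalar multiplication.

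For (ii) $\Rightarrow$ (i), I would associate to each non-empty open $V \subset X$ its saturation
$$C_V := \{\mathbf{y} \in X : \mathbb{K}\mathbf{y} \cap V \neq \varnothing\}$$
and verify that $C_V$ is open, using that scalar multiplication by any non-zero $\lambda_0 \in \mathbb{K}$ is a homeomorphism of $X$ (the case $0 \in V$ giving $C_V = X$ trivially). Then $A_V := \bigcup_{n \geq 0} T^{-n}(C_V)$ is open, and hypothesis (ii) translates exactly to the statement that $A_V$ meets every non-empty open set $U$, i.e.\ $A_V$ is dense. Fixing a countable basis $\{V_k\}_{k \geq 1}$ of the topology (available by separability of the $F$-space $X$), I would invoke the Baire category theorem to conclude that $\bigcap_k A_{V_k}$ is a dense $G_\delta$, and a routine unraveling using that $\{V_k\}$ is a basis shows this set equals $SC(T)$, which is therefore non-empty.

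For (i) $\Rightarrow$ (ii), I would fix $\mathbf{x} \in SC(T)$ and open non-empty $U, V \subset X$. The strategy is to choose a point $u = \lambda_0 T^{n_0}(\mathbf{x}) \in U$ from the projective orbit with $\lambda_0 \neq 0$ and $T^{n_0}(\mathbf{x}) \neq \mathbf{0}$, then find $m \geq n_0$ and $\mu \in \mathbb{K}$ with $\mu T^{m}(\mathbf{x}) \in V$; setting $n := m - n_0$ and $\lambda := \mu/\lambda_0$ then yields $\lambda T^{n}(u) = \mu T^{m}(\mathbf{x}) \in V$. Existence of such $(m,\mu)$ reduces to the claim that $T^{n_0}(\mathbf{x})$ is itself supercyclic, which follows because its projective orbit is obtained from the dense projective orbit of $\mathbf{x}$ by deleting the finite union $\bigcup_{k < n_0} \mathbb{K} T^{k}(\mathbf{x})$ of closed $1$-dimensional subspaces, a nowhere-dense set whenever $\dim X > 1$, so density is preserved. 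The trivial case $\dim X = 1$ is dispatched by a direct scalar computation in $\mathbb{K}$.

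The main technical points I expect will require care are the openness of the saturation $C_V$ (where the case $0 \in V$ must be separated from the generic case where one uses the homeomorphism $\mathbf{y} \mapsto \lambda_0 \mathbf{y}$) and the fact that proper linear subspaces of a non-trivial topological vector space are nowhere dense, which is what guarantees that the finitely many lines $\mathbb{K} T^{k}(\mathbf{x})$ cannot destroy density of the truncated projective orbit in the (i)$\Rightarrow$(ii) direction. With these two openness/genericity facts in hand, the proof is a direct parallel of Theorem \ref{Btthm}, the only genuinely new ingredient being the scalar factor $\lambda$ that gets absorbed at the final step via division by $\lambda_0$.
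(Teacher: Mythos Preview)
Your proposal is correct and is precisely the natural adaptation the paper has in mind: the paper does not spell out a proof of Theorem~\ref{Btthm2} at all, writing only that ``the proof is similar to the proof of Theorem~\ref{Btthm}.'' Your Baire-category argument with the scalar-saturated sets $C_V$ for (ii)$\Rightarrow$(i), and the observation that $T^{n_0}(\mathbf{x})\in SC(T)$ whenever $\mathbf{x}\in SC(T)$ (via the nowhere-density of finitely many lines) for (i)$\Rightarrow$(ii), are exactly how one makes that parallel precise.
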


The proof is similar to the proof of Theorem \ref{Btthm}.

\begin{defin}\label{scdef} Let $X$ be a topological vector space, and let $T\in L(X)$. We say that $T$ satisfies the
{\bf Supercyclic Criterion} if there exist an increasing sequence of integers $(n_k)$, two dense sets
$\mathcal D_1,\mathcal D_2\subset X$ and a sequence of maps $S_{n_k}:\mathcal D_2\to X$ such that:
\begin{enumerate}
\item[(1)]  $\parallel T^{n_k}(\textbf{x})\parallel\parallel
S_{n_k}(\textbf{y})\parallel\to0$ for any $\textbf{x}\in\mathcal
D_1$ and any $\textbf{y}\in\mathcal D_2$;

\item[(2)] $T^{n_k}S_{n_k}(\textbf{y})\to\textbf{y}$ for any
$\textbf{y}\in\mathcal D_2$.
\end{enumerate}
\end{defin}
\begin{thm}\label{scthm} Let $T\in L(X)$, where $X$ is a separable Banach space. Assume that $T$ satisfies the Supercyclic
Criterion. Then $T$ is supercyclic.
\end{thm}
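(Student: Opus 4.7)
The plan is to reduce the statement to the transitivity criterion of Theorem \ref{Btthm2}, so I must show that for each pair of non-empty open sets $U,V \subset X$ there exist $n \in \mathbb{N}$ and $\lambda \in \mathbb{K}$ with $\lambda T^n(U) \cap V \neq \varnothing$. By density of $\mathcal{D}_1$ and $\mathcal{D}_2$, I would first choose $\mathbf{x} \in U \cap \mathcal{D}_1$ and $\mathbf{y} \in V \cap \mathcal{D}_2$.

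The key trick, mirroring the real-case argument, is to consider the perturbed vector $\mathbf{u}_k := \mathbf{x} + \lambda_k S_{n_k}(\mathbf{y})$ for suitably chosen scalars $\lambda_k \in \mathbb{K}^\times$, and then look at the image under $\lambda_k^{-1} T^{n_k}$. By linearity,
\[
\lambda_k^{-1} T^{n_k}(\mathbf{u}_k) = \lambda_k^{-1} T^{n_k}(\mathbf{x}) + T^{n_k} S_{n_k}(\mathbf{y}),
\]
and the second summand tends to $\mathbf{y}$ by condition $(2)$ of the Supercyclic Criterion. To conclude, one needs $\lambda_k S_{n_k}(\mathbf{y}) \to \mathbf{0}$ (so that $\mathbf{u}_k \to \mathbf{x} \in U$) and simultaneously $\lambda_k^{-1} T^{n_k}(\mathbf{x}) \to \mathbf{0}$ (so that the image converges to $\mathbf{y} \in V$).

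The main obstacle is producing such a sequence of scalars, and this is precisely where the non-Archimedean machinery bites. In the real case one could simply set $\lambda_k := \bigl(\|S_{n_k}(\mathbf{y})\|/\|T^{n_k}(\mathbf{x})\|\bigr)^{-1/2}$, but over a general non-Archimedean valued field $\mathbb{K}$ only the values in $|\mathbb{K}^\times|$ are attainable. Fortunately, condition $(1)$ of the Supercyclic Criterion gives exactly the hypothesis of Lemma \ref{x_ny_nlambda_n}: setting $\mathbf{x}_k := S_{n_k}(\mathbf{y})$ and $\mathbf{y}_k := T^{n_k}(\mathbf{x})$, one has $\|\mathbf{x}_k\|\cdot\|\mathbf{y}_k\|\to 0$, so the lemma (whose proof separately handles the discrete and dense value-group cases) produces $(\lambda_k) \subset \mathbb{K}^\times$ with $\lambda_k \mathbf{x}_k \to \mathbf{0}$ and $\lambda_k^{-1} \mathbf{y}_k \to \mathbf{0}$.

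With these scalars in hand, the conclusion is immediate: $\mathbf{u}_k \to \mathbf{x} \in U$ forces $\mathbf{u}_k \in U$ for all large $k$, while $\lambda_k^{-1} T^{n_k}(\mathbf{u}_k) \to \mathbf{y} \in V$ forces $\lambda_k^{-1} T^{n_k}(\mathbf{u}_k) \in V$ for all large $k$. Picking any such $k$ gives $\lambda_k^{-1} T^{n_k}(U) \cap V \neq \varnothing$, which verifies condition $(ii)$ of Theorem \ref{Btthm2} and hence yields supercyclicity of $T$. I do not expect the Banach hypothesis to play any role beyond providing a complete separable metrizable topology (so Theorem \ref{Btthm2} applies); the whole argument runs in any separable $F$-space, with Lemma \ref{x_ny_nlambda_n} supplying the single non-trivial ingredient.
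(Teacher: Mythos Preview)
Your proposal is correct and follows essentially the same approach as the paper: pick $\mathbf{x}\in\mathcal D_1\cap U$, $\mathbf{y}\in\mathcal D_2\cap V$, invoke Lemma \ref{x_ny_nlambda_n} on the sequences $T^{n_k}(\mathbf{x})$ and $S_{n_k}(\mathbf{y})$ to produce scalars, and then check that the perturbed vector lands where needed so Theorem \ref{Btthm2} applies. The only difference is a harmless relabeling (your $\lambda_k$ is the paper's $\lambda_k^{-1}$).
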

\begin{proof}
Let $U$ and $V$ be two non-empty open subsets of $X$. Take
$\textbf{x}\in\mathcal D_1\cap U$ and $\textbf{y}\in\mathcal
D_2\cap V$. It follows from part (1) of Definition \ref{scdef} and
according to Lemma \ref{x_ny_nlambda_n} that we can find a
sequence of non-zero scalars $(\lambda_k)$ such that
$\lambda_kT^{n_k}(\textbf{x})\to\textbf{0}$ and
$\lambda_k^{-1}S_{n_k}(\textbf{y})\to\textbf{0}$. Then, for large
enough $k$, the vector
$\textbf{z}=\textbf{x}+\lambda_k^{-1}S_{n_k}(\textbf{y})$ belongs
to $U$ and $\lambda_k T^{n_k}(\textbf{z})$ belongs to $V$. By
Theorem \ref{Btthm2} this shows that $T$ is supercyclic.
\end{proof}
\begin{lem}\label{scp}
Let $X_0$ and $X$ be Banach spaces over the field $\mathbb K$ and $T_0\in L(X_0),\ T\in L(X)$ be
such that there exists a $J\in L(X_0,X)$ which has dense range and satisfying
$TJ=JT_0$. Then supercyclicity (cyclicity) of $T_0$ implies supercyclicity (cyclicity)
of $T$
\end{lem}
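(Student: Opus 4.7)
The plan is to verify directly that if $\textbf{x}_0$ is a supercyclic (resp. cyclic) vector for $T_0$, then $J(\textbf{x}_0)$ is a supercyclic (resp. cyclic) vector for $T$. First I would establish by an easy induction on $n$ that the intertwining $TJ=JT_0$ promotes to $T^nJ=JT_0^n$ for every $n\in\mathbb Z_+$. Combined with the $\mathbb K$-linearity of $J$, this gives
$$
J(\lambda T_0^n\textbf{x}_0)=\lambda T^nJ(\textbf{x}_0),\qquad J(P(T_0)\textbf{x}_0)=P(T)J(\textbf{x}_0)
$$
for every $\lambda\in\mathbb K$, every $n\in\mathbb Z_+$, and every polynomial $P\in\mathbb K[X]$. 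Consequently
$$
J\bigl(\mathbb K\cdot O(\textbf{x}_0,T_0)\bigr)=\mathbb K\cdot O(J(\textbf{x}_0),T),\qquad J\bigl(\mathbb K[T_0]\textbf{x}_0\bigr)=\mathbb K[T]J(\textbf{x}_0).
$$

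Next I would record the elementary topological fact that a continuous map $J$ with dense range sends every dense subset $D\subset X_0$ to a dense subset of $X$: given $\textbf{y}\in X$ and $\varepsilon>0$, choose $\textbf{z}\in X_0$ with $\|J(\textbf{z})-\textbf{y}\|<\varepsilon/2$ by density of $\mathrm{Ran}(J)$, then use continuity of $J$ at $\textbf{z}$ to pick $\textbf{d}\in D$ with $\|J(\textbf{d})-J(\textbf{z})\|<\varepsilon/2$. Applying this observation with $D=\mathbb K\cdot O(\textbf{x}_0,T_0)$ (resp.\ $D=\mathbb K[T_0]\textbf{x}_0$), which is dense in $X_0$ by assumption, one concludes that $\mathbb K\cdot O(J(\textbf{x}_0),T)$ (resp.\ $\mathbb K[T]J(\textbf{x}_0)$) is dense in $X$, so that $J(\textbf{x}_0)\in SC(T)$ (resp.\ $J(\textbf{x}_0)\in CC(T)$).

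There is no real obstacle here — the argument is a routine transfer of dynamical properties under a quasi-factoring map, essentially a linear analogue of the first assertion of Lemma~\ref{hcp}. The only point that deserves a moment of care is the density-transfer step: if one only uses continuity of $J$ without density of its range, the image $J(D)$ is merely dense in $J(X_0)$, not in $X$. The hypothesis that $J$ has dense range is therefore used in exactly this place, and the non-Archimedean character of the norms plays no role beyond the standard estimate $\|J(\textbf{d})-J(\textbf{z})\|\le\|J\|\cdot\|\textbf{d}-\textbf{z}\|$.
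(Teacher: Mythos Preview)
Your proof is correct and follows essentially the same approach as the paper's: both establish the set identities $J(\mathbb K\cdot O(\textbf{x}_0,T_0))=\mathbb K\cdot O(J(\textbf{x}_0),T)$ and $J(\mathbb K[T_0]\textbf{x}_0)=\mathbb K[T]J(\textbf{x}_0)$ via the intertwining relation and linearity of $J$, and then conclude density. Your version simply spells out in more detail the steps (the induction $T^nJ=JT_0^n$ and the density-transfer argument for continuous maps with dense range) that the paper leaves implicit.
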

\begin{proof}
Observe that
$$
\begin{array}{ll}
\{\lambda (T^nJ)(\textbf{x}_0): n\in\mathbb Z_+,\ \lambda\in\mathbb K\}=J(\{\lambda T_0^n(\textbf{x}_0): n\in\mathbb Z_+,\ \lambda\in\mathbb K\}),\\[3mm]
\mbox{span}\{(T^nJ)(\textbf{x}_0): n\in\mathbb Z_+\}=J(\mbox{span}\{T_0^n(\textbf{x}_0): n\in\mathbb Z_+\})
\end{array}
$$
for any $\textbf{x}_0\in X_0$.
Hence, $J(\textbf{x}_0)$ is a supercyclic (cyclic) vector for $T$ for each $\textbf{x}_0\in SC(T_0)$ (resp. $\textbf{x}_0\in C(T)$).
\end{proof}

\section{backward shifts on $c_0$}

In the present section, we are going to study the backward shifts
on $c_0$. Here $c_0$ stands for the set of all sequences which
tend to zero equipped with a norm
$$
\parallel\textbf{x}\parallel:=\sup_{n}\{|x_n|\},\ \ \ \ \textbf{x}\in c_0.
$$
It is clear that $c_0$ is a Banach space.
For convenience, we denote
$$
c_0(\mathbb Z):=\{(x_n)_{n\in\mathbb Z}: x_n\in\mathbb K, |x_{\pm n}|\to0\ \mbox{as }n\to+\infty\}
$$
and
$$
c_0(\mathbb N):=\{(x_n)_{n\in\mathbb N}: x_n\in\mathbb K, |x_{n}|\to0\ \mbox{as }n\to+\infty\}
$$
In what follows, we always assume that $c_0$ is a separable space.
Note that the separability of $c_0$ is equivalent to the
separability of $\mathbb K$. Let $K$ be a countable dense subset
of $\mathbb K$. Then the countable set
$$
c_{00}(\mathbb Z):=\{\lambda_{-n}\textbf{e}_{-n}+\lambda_{-n+1}\textbf{e}_{-n+1}+\cdots+\lambda_{n}\textbf{e}_n:\lambda_{\pm n}\in K, n\in\mathbb N\}
$$
is dense in $c_0(\mathbb Z)$, where $\textbf{e}_n$ is an unit vector such that only $n$-th coordinate equals to 1 and others are zero.

Let $\textbf{a}=(a_n)_{n\in\mathbb Z}$ be a bounded sequence of
non-zero numbers of $\mathbb K$. An operator $B_\textbf{a}$ on
$c_0(\mathbb Z)$ defined by
$B_\textbf{a}(\textbf{e}_n)=a_{n}\textbf{e}_{n-1}$ is called {\it
bilateral weighted backward shift} if $a_i\neq1$ for some
$i\in\mathbb Z$, otherwise it is called {\it bilateral unweighted
backward shift} and we denote it by $B$.

\begin{thm}
Let $B_\textbf{a}$ be a bilateral weighted backward shift operator
on $c_0(\mathbb Z)$. Then the following statements hold:
\begin{enumerate}
\item[(i)] $B_\textbf{a}$ is hypercyclic if and only if, for any
$q\in\mathbb N$,
\begin{equation}\label{hhh}
\liminf\limits_{n\to+\infty}\max\left\{\prod\limits_{i=1}^{n+q}|a_i^{-1}|,\prod_{j=1}^{n-q}|a_{-j+1}|\right\}=0.
\end{equation}
\item[(ii)] $B_\textbf{a}$ is supercyclic if and only if, for any
$q\in\mathbb N$,
\begin{equation}\label{ccc}
\liminf\limits_{n\to+\infty}\prod\limits_{i=1}^{n+q}|a_i^{-1}|\times\prod_{j=1}^{n-q}|a_{-j+1}|=0.
\end{equation}
\end{enumerate}
\end{thm}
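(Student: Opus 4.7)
The plan is to prove each ``if'' direction via the corresponding Criterion (Theorems \ref{hcthm} and \ref{scthm}) and each ``only if'' direction via the appropriate transitivity characterisation (Theorems \ref{Btthm} and \ref{Btthm2}). For ease of notation set $P^+_r := \prod_{i=1}^r |a_i|$ and $P^-_r := \prod_{j=1}^r |a_{-j+1}|$, so that \eqref{hhh} becomes $\liminf_n \max\{(P^+_{n+q})^{-1}, P^-_{n-q}\} = 0$ and \eqref{ccc} becomes $\liminf_n (P^+_{n+q})^{-1}P^-_{n-q}=0$. I work on the dense subspace $c_{00}(\mathbb{Z})$ and with the formal right inverse $S_{\mathbf{a}}: \textbf{e}_n \mapsto a_{n+1}^{-1}\textbf{e}_{n+1}$, which satisfies $B_{\mathbf{a}}^n S_{\mathbf{a}}^n = I$, automatically providing condition (3) of the Hypercyclic Criterion and condition (2) of the Supercyclic Criterion. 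Direct iteration yields $B_{\mathbf{a}}^n \textbf{e}_j = \big(\prod_{m=j-n+1}^j a_m\big)\textbf{e}_{j-n}$ and $S_{\mathbf{a}}^n \textbf{e}_j = \big(\prod_{m=j+1}^{j+n} a_m^{-1}\big)\textbf{e}_{j+n}$; splitting each index range at $m=0$ expresses these norms in terms of $P^\pm_r$.

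For sufficiency of (i), I take $\mathcal{D}_1 = \mathcal{D}_2 = c_{00}(\mathbb{Z})$ and $S_{n_k} = S_{\mathbf{a}}^{n_k}$. For $\textbf{x}, \textbf{y}$ supported in $\{-q,\dots,q\}$, the required convergences reduce to controlling $\|B^n\textbf{e}_j\|$ and $\|S^n\textbf{e}_j\|$ for $|j|\le q$. Splitting at $m=0$ shows that each such norm is a bounded multiple of $P^-_{n-q}$, respectively $(P^+_{n+q})^{-1}$, with the multiplicative correction involving at most $2q$ weights and hence uniformly bounded by $M^{2q}$ where $M = \sup_m|a_m|$. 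Hypothesis \eqref{hhh} for this $q$ supplies a subsequence along which both quantities vanish, and a diagonal extraction across $q\in\mathbb{N}$ gives the required single sequence $(n_k)$. The supercyclic sufficiency is parallel: $\|B^n\textbf{e}_j\|\cdot\|S^n\textbf{e}_{j'}\|$ for $|j|,|j'|\le q$ is bounded by a fixed multiple of $P^-_{n-q}/P^+_{n+q}$, so \eqref{ccc} yields the required subsequence.

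For necessity of (i), I apply Theorem \ref{Btthm} to $U=V=B(\textbf{v}_q,\epsilon)$ with $\textbf{v}_q := \sum_{|j|\le q}\textbf{e}_j$ and $\epsilon\in(0,1)$. Density of $HC(B_{\mathbf{a}})$ in $U$ makes the set of valid indices $n$ unbounded, so I may take $n > 2q$ arbitrarily large with some $\textbf{z}\in U$ and $B^n\textbf{z}\in V$. By the non-Archimedean property, $|z_j| = 1$ for $|j|\le q$ and $|z_k|\le\epsilon$ otherwise, and similarly for $B^n\textbf{z}$. Using $(B^n\textbf{z})_k = z_{n+k}\prod_{i=1}^n a_{k+i}$, I extract two constraints. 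At $k=q$ (in the support of $\textbf{v}_q$), $|(B^n\textbf{z})_q| = 1$ and $|z_{n+q}|\le\epsilon$ force $\prod_{i=1}^n|a_{q+i}| = P^+_{n+q}/P^+_q \ge 1/\epsilon$, hence $(P^+_{n+q})^{-1}\le \epsilon/P^+_q$. At $k=-n+q$ (outside the support since $|k|=n-q>q$), $|(B^n\textbf{z})_{-n+q}|\le\epsilon$ and $|z_q|=1$ force $\prod_{i=1}^n|a_{-n+q+i}|\le\epsilon$, and splitting at $m=0$ this equals $P^-_{n-q}P^+_q$, giving $P^-_{n-q}\le\epsilon/P^+_q$. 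Since $P^+_q$ is fixed and $\epsilon\to 0$ along a sequence with $n$ arbitrarily large, \eqref{hhh} follows.

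For necessity of (ii), Theorem \ref{Btthm2} produces $\textbf{z}\in U$ and $\lambda\ne 0$ with $\lambda B^n\textbf{z}\in V$. Coordinate $k=q$ now yields a lower bound $|\lambda| \ge P^+_q/(\epsilon P^+_{n+q})$, while coordinate $k=-n+q$ yields an upper bound $|\lambda|\le \epsilon/(P^-_{n-q}P^+_q)$. Multiplying the two bounds eliminates $|\lambda|$ and gives $P^-_{n-q}/P^+_{n+q}\le \epsilon^2/(P^+_q)^2$, proving \eqref{ccc}. The technical crux throughout is the necessity direction: the specific choice of test coordinates $k=q$ (on the support) and $k=-n+q$ (far off the support) is what causes the extracted weight-products to match exactly $P^+_{n+q}/P^+_q$ and $P^-_{n-q}P^+_q$, the very expressions appearing in \eqref{hhh} and \eqref{ccc}; in the supercyclic case, the elimination of $\lambda$ by multiplying the two bounds on $|\lambda|$ is what converts a ``max''-condition into a ``product''-condition.
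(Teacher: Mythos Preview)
Your plan is essentially the paper's strategy: the Hypercyclic/Supercyclic Criterion on $c_{00}(\mathbb Z)$ with the forward shift as right inverse for sufficiency, and a coordinate-extraction argument from transitivity for necessity. The paper streamlines the bookkeeping by first conjugating $B_{\mathbf a}$ to the \emph{unweighted} shift $B$ on the weighted space $c_0(\mathbb Z,\mathbf b)$ with $b_0=1$, $b_n b_{n+1}^{-1}=a_{n+1}$, so that the quantities in \eqref{hhh}--\eqref{ccc} become simply $|b_{\pm n+q}|$ and the necessity argument uses only the single test vector $\mathbf e_q$; your choice of $\mathbf v_q=\sum_{|j|\le q}\mathbf e_j$ works as well, but you only ever use the coordinates $q$ and $-n+q$, so $\mathbf e_q$ would suffice.

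Two points in your sufficiency argument need care. First, the claim that the correction factor is bounded by $M^{2q}$ is not quite right: when $j\le 0$ the ratio $\|B_{\mathbf a}^n\mathbf e_j\|/P^-_{n-q}$ contains the factor $1/P^-_{-j}$, a product of \emph{reciprocals} of weights at the fixed indices $0,-1,\dots,j+1$, which is not dominated by $M^{2q}$. It is, however, bounded by a constant $C_q$ depending only on $q$ (and the fixed weights near $0$), which is all you need.

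Second, and more substantively, the phrase ``diagonal extraction across $q$'' hides a genuine issue: the subsequences supplied by \eqref{hhh} for different values of $q$ need not be nested, so the usual subsequence-of-subsequence diagonal does not apply. What does work (and what the paper does after conjugation) is to pick, for each $k$, an index $n_k>n_{k-1}$ with $\max\{(P^+_{n_k+k})^{-1},P^-_{n_k-k}\}<M^{-3k}$; this is possible because the $\liminf$ is $0$. Then for fixed $q_0$ and $k\ge q_0$ one has $(P^+_{n_k+q_0})^{-1}\le M^{k-q_0}(P^+_{n_k+k})^{-1}$ and $P^-_{n_k-q_0}\le M^{k-q_0}P^-_{n_k-k}$, so both are $\le M^{-2k}\to 0$, and the Criterion follows. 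Without this rate control the step fails, since the transition from the $q=k$ estimate to the $q=q_0$ estimate costs $M^{k-q_0}$.
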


\begin{proof} For any weight $\textbf{b}\in l_\infty(\mathbb Z)$ with $b_n\neq0,\ n=0,\pm1,\pm2,\dots$
we introduce the weighted space
$$
c_0(\mathbb Z,\textbf{b}):=\left\{\textbf{x}\in c_0(\mathbb Z):\ \parallel\textbf{x}\parallel_\textbf{b}=\sup_{n}|b_nx_n|\right\}.
$$
Take a weight sequence $\textbf{b}=(b_n)_{n\in\mathbb Z}$ as
follows $b_0=1$ and $b_nb^{-1}_{n+1}=a_{n+1}$. Let $B$ be the
bilateral backward shift on $c_0(\mathbb Z,\textbf{b})$. Then
$B_\textbf{a}$ is linearly conjugate to $B$. Indeed, the operator
$J:c_0(\mathbb Z)\to c_0(\mathbb Z,\textbf{b})$ defined by
$(J\textbf{x})_n=b_n^{-1}x_n$ is a linear homeomorphism and
$J(c_0(\mathbb Z))=c_0(\mathbb Z,\textbf{b}),\ JB_\textbf{a}=BJ$.
According to Lemma \ref{hcp} (resp. Lemma \ref{scp})
hypercyclicity (supercyclicity) of $B_\textbf{a}$ is equivalent to
the hypercycility (resp. supercyclicity) of $B$.

Assume that $B$ is hypercyclic and fix $q\in\mathbb N$. Due to the
density of $O(\textbf{x},B)$ (for all $\textbf{x}\in HC(B)$), for
an arbitrary $\varepsilon>0$ one can find $\textbf{x}\in HC(B)$
and an integer $n>2q$ such that
$$
\parallel \textbf{x}-\textbf{e}_q\parallel_\textbf{b}<\varepsilon\ \ \ \ \mbox{and}\ \ \ \ \parallel B^n(\textbf{x})-\textbf{e}_q\parallel_\textbf{b}<\varepsilon.
$$
These inequalities imply that
\begin{equation}\label{qn+q}
|b_q(x_q-1)|<\varepsilon,\ \ \ \ \ |b_{n+q}x_{n+q}|<\varepsilon,
\end{equation}
\begin{equation}\label{q-n+q}
|b_q(x_{n+q}-1)|<\varepsilon,\ \ \ \ \
|b_{-n+q}x_{q}|<\varepsilon.
\end{equation}
We assume that $\varepsilon<|b_q|$. Then from the first
inequalities of \eqref{qn+q} and \eqref{q-n+q} we obtain
$|x_q-1|<1$ and $|x_{n+q}-1|<1$. Hence, by the non-Archimedean
norm's property, one gets $|x_q|=|x_{n+q}|=1$. Putting it into the
second inequalities of \eqref{qn+q} and \eqref{q-n+q} one finds
$|b_{\pm n+q}|<\varepsilon$, which is equivalent to
\begin{equation}\label{hhhb}
\liminf_{n\to+\infty}|b_{\pm n+q}|=0.
\end{equation}
Now let us assume that \eqref{hhhb} holds for any $q\in\mathbb N$. We will show that
$B$ satisfies the Hypercyclic Criterion. Take a some positive number $M$ such that
$$
M>\max\left\{1,\sup_n\frac{|b_n|}{|b_{n+1|}}\right\}.
$$
By \eqref{hhhb}, one can find an increasing sequence of positive integers $\{n_k\}$ such that
$$
|b_{\pm n_{k}+k}|\leq M^{-3k}\ \ \ \ \mbox{for all}\ \ k\in\mathbb N.
$$
Assume that $i$ be a fixed integer and $k>|i|$. Then $\left|b_{\pm n_k+i}\right|<M^{i+k}\left|b_{\pm n_k+k}\right|\leq M^{-2k+i}<M^{-k}$. It follows
that $b_{n_k+i}\to0$ as $k\to\infty$ for any $i\in\mathbb Z$. Now, let $\mathcal
D_1=\mathcal D_2:=c_{00}(\mathbb Z)$ and let $S$ be the forward
shift, defined on $\mathcal D_2$ by
$S(\textbf{e}_i)=\textbf{e}_{i+1}$. Due to the linearity of $B$
and $S$, it is enough to show that $B^{n_k}(\textbf{e}_i)\to0$ and
$S^{n_k}(\textbf{e}_i)\to0$ for any $i\in\mathbb Z$. But this is
clear since
$$
\parallel B^{n_k}(\textbf{e}_i)\parallel_\textbf{b}=|b_{-n_k+i}|\ \ \ \ \mbox{and}\ \ \ \ \parallel S^{n_k}(\textbf{e}_i)\parallel_\textbf{b}=|b_{n_k+i}|.$$
Thus, we have shown that $B_\textbf{a}$ is hypercyclic if and only
if for any $q\in\mathbb N$ holds \eqref{hhhb}. According to
$$
b_n=\prod_{i=1}^na_i^{-1}\ \ \ \mbox{and}\ \ \ b_{-n}=\prod_{j=1}^na_{-j+1}\ \ \ \mbox{for all}\ n\in\mathbb N
$$
it is easy to see that \eqref{hhhb} and \eqref{hhh} are equivalent.

Now we turn to the supercyclic case. Suppose that $B$ is
supercyclic and $q\in\mathbb N$. Let $\varepsilon<0$  be an
arbitrary number. Then the density of supercyclic vectors implies
the existence of $\textbf{x}\in c_0(\mathbb Z,\textbf{b}),\
\lambda\in\mathbb K^\times$ and $n>2q$ such that
$$
\parallel\textbf{x}-\textbf{e}_q\parallel_\textbf{b}<\varepsilon\ \ \ \ \mbox{and}\ \ \ \ \parallel\lambda B^n(\textbf{x})-\textbf{e}_q\parallel_\textbf{b}<\varepsilon.
$$
As above, we obtain
$$
\begin{array}{ll}
|b_q(x_q-1)|<\varepsilon,&\ \ \ \ \ |b_{n+q}x_{n+q}|<\varepsilon,\\[2mm]
|b_q(\lambda x_{n+q}-1)|<\varepsilon,&\ \ \ \ \ |\lambda b_{-n+q}x_{q}|<\varepsilon.
\end{array}
$$
Assuming $\varepsilon<|b_q|$ and using the non-Archimedean norm's
property one finds
$$
|b_{-n+q}|<\frac{\varepsilon}{|\lambda|}\ \ \ \ \mbox{and}\ \ \ \ \ |b_{n+q}|<\varepsilon|\lambda|.
$$
Hence, $|b_{-n+q}b_{n+q}|<\varepsilon^2$ which yields
\begin{equation}\label{cccb}
\liminf\limits_{n\to+\infty}|b_{n+q}b_{-n+q}|=0.
\end{equation}
Note that \eqref{cccb} and \eqref{ccc} are equivalent.

If the condition \eqref{cccb} holds then we can find as above an
increasing sequence $(n_k)$ such that, for any $i,j\in\mathbb Z$,
$$
b_{n_k+i}b_{-n_k+j}\to0,\ \ \ \mbox{as}\ k\to+\infty.
$$
This  exactly means that the Supercyclic Criterion is satisfied
for $\mathcal D_1=\mathcal D_2:=c_{00}(\mathbb Z)$ and the forward
shift $S$, since
$$
\parallel B^{n_k}(\textbf{e}_j)\parallel_\textbf{b}\cdot\parallel S_{n_k}(\textbf{e}_i)\parallel_\textbf{b}=|b_{n_k+i}b_{-n_k+j}|.
$$
This completes the proof.
\end{proof}

From this theorem we immediately find the following results.

\begin{cor}
Let $B_\textbf{a}$ be a bilateral weighted backward shift on $c_0(\mathbb Z)$. If $B_\textbf{a}$ is supercyclic
then $\lambda B_\textbf{a}$ is supercyclic for any $\lambda\in\mathbb K^\times$.
\end{cor}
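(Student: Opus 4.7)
The plan is to read off the result as an immediate consequence of part (ii) of the preceding theorem. First I would observe that $\lambda B_{\textbf{a}}$ is itself a bilateral weighted backward shift on $c_0(\mathbb{Z})$, with weight sequence $\lambda\textbf{a} = (\lambda a_n)_{n\in\mathbb{Z}}$, since $\lambda B_{\textbf{a}}(\textbf{e}_n) = \lambda a_n \textbf{e}_{n-1}$. Hence the characterization \eqref{ccc} applies verbatim to $\lambda B_{\textbf{a}}$, with every $a_i$ replaced by $\lambda a_i$.

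The key step is then a direct computation of the product that appears in \eqref{ccc}. For each fixed $q\in\mathbb{N}$ and $n>q$,
$$
\prod_{i=1}^{n+q}|(\lambda a_i)^{-1}|\;\cdot\;\prod_{j=1}^{n-q}|\lambda a_{-j+1}|
= |\lambda|^{-(n+q)}\,|\lambda|^{n-q}\,\prod_{i=1}^{n+q}|a_i^{-1}|\,\prod_{j=1}^{n-q}|a_{-j+1}|
= |\lambda|^{-2q}\,\prod_{i=1}^{n+q}|a_i^{-1}|\,\prod_{j=1}^{n-q}|a_{-j+1}|.
$$
Since $|\lambda|^{-2q}$ is a strictly positive constant independent of $n$, taking $\liminf_{n\to\infty}$ commutes with this scalar factor.

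Finally, assuming $B_{\textbf{a}}$ is supercyclic, part (ii) of the theorem gives $\liminf_{n\to\infty}\prod_{i=1}^{n+q}|a_i^{-1}|\prod_{j=1}^{n-q}|a_{-j+1}|=0$ for every $q\in\mathbb{N}$. The displayed identity above then yields $\liminf_{n\to\infty}\prod_{i=1}^{n+q}|(\lambda a_i)^{-1}|\prod_{j=1}^{n-q}|\lambda a_{-j+1}|=0$ for every $q$, so that $\lambda B_{\textbf{a}}$ satisfies the characterizing condition \eqref{ccc} and is therefore supercyclic.

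There is no real obstacle here; the only thing to watch is the bookkeeping of the number of $\lambda$-factors, which is $-(n+q)+(n-q)=-2q$, independent of $n$. This independence from $n$ is what makes the constant factor harmless in the $\liminf$, and is exactly why the analogous statement holds for supercyclicity (whereas it would fail for a naive analogue in the hypercyclic case, since the hypercyclic condition \eqref{hhh} involves a maximum rather than a product).
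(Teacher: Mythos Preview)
Your argument is correct and is exactly the computation the paper has in mind when it says the corollary follows immediately from the preceding theorem: identify $\lambda B_{\textbf{a}}$ as $B_{\lambda\textbf{a}}$ and observe that the product in \eqref{ccc} changes only by the $n$-independent factor $|\lambda|^{-2q}$. The paper gives no further details, so your write-up in fact supplies more than the paper does.
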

\begin{cor}
Let $B_\textbf{a}$ be a bilateral weighted backward shift on $c_0(\mathbb Z)$. If the weight sequence $\textbf{a}=(a_n)_{n\in\mathbb Z}$
is symmetrical to the norm, i.e. $|a_n|=|a_{-n}|,\ n=1,2,\dots$ then $B_\textbf{a}$ is not supercyclic.
\end{cor}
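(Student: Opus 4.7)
The plan is to apply the supercyclicity characterization of the preceding theorem and use the symmetry hypothesis to force the relevant $\liminf$ to be strictly positive. By part (ii) of the theorem, to show that $B_\textbf{a}$ is not supercyclic it is enough to exhibit some $q\in\mathbb N$ for which
$$
\liminf_{n\to+\infty}\prod_{i=1}^{n+q}|a_i^{-1}|\cdot\prod_{j=1}^{n-q}|a_{-j+1}|>0,
$$
and in fact I will verify this for every $q$.

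First, I would simplify the second product. For $n\geq q+1$, as $j$ runs from $1$ to $n-q$ the index $-j+1$ runs through $0,-1,-2,\dots,-(n-q-1)$. Separating the $j=1$ term (which contributes $|a_0|$) and applying the symmetry hypothesis $|a_{-k}|=|a_k|$ for $k\geq 1$, the second product becomes $|a_0|\prod_{k=1}^{n-q-1}|a_k|$. Combining this with the first product and cancelling the common factors $|a_k|^{-1}$ for $k=1,\dots,n-q-1$ collapses the whole expression into the clean telescoping form
$$
\prod_{i=1}^{n+q}|a_i^{-1}|\cdot\prod_{j=1}^{n-q}|a_{-j+1}|=\frac{|a_0|}{\prod_{k=n-q}^{n+q}|a_k|}.
$$

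Next, I would invoke the boundedness of $\textbf{a}$: setting $M:=\sup_{n\in\mathbb Z}|a_n|<\infty$, the denominator above has exactly $2q+1$ factors, each at most $M$, so the quotient is bounded below by $|a_0|/M^{2q+1}$. Since all the $a_n$ are non-zero by hypothesis, in particular $a_0\neq 0$, this lower bound is strictly positive. Taking $\liminf$ we obtain a value of at least $|a_0|/M^{2q+1}>0$, so the supercyclicity condition in the theorem fails for every $q\in\mathbb N$, and hence $B_\textbf{a}$ is not supercyclic.

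There is no serious obstacle in this argument beyond careful index bookkeeping; the symmetry hypothesis causes all but $2q+1$ of the weight factors to cancel out, and the boundedness of the weight sequence then supplies a uniform positive lower bound independent of $n$. The non-Archimedean character of $\mathbb K$ plays no role here, since the argument proceeds entirely at the level of absolute values.
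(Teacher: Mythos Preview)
Your proof is correct and follows precisely the route the paper intends: the corollary is stated without proof, so the expected argument is the direct verification from the supercyclicity characterization in the preceding theorem, which is exactly what you carry out. Your index bookkeeping and cancellation are accurate, and the use of the boundedness of $\textbf{a}$ to obtain the uniform lower bound $|a_0|/M^{2q+1}$ is the natural way to finish.
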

\begin{cor}
Let $B$ be a bilateral unweighted backward shift on $c_0(\mathbb Z)$. Then $B$ is not supercyclic. Moreover,
$\lambda B$ is not supercyclic for any $\lambda\in\mathbb K$.
\end{cor}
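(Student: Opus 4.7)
The plan is to reduce everything to the previous theorem's characterization \eqref{ccc}, since $\lambda B$ (for $\lambda \neq 0$) is simply the bilateral weighted backward shift with constant weight sequence $a_n \equiv \lambda$.

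First I would dispose of the trivial case $\lambda = 0$ separately: then $\lambda B = 0$, the projective orbit of any vector $\mathbf{x}$ is contained in the one-dimensional subspace $\mathbb{K}\cdot\mathbf{x}$, which cannot be dense in the infinite-dimensional space $c_0(\mathbb{Z})$.

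For $\lambda \in \mathbb{K}^\times$, I would observe that $\lambda B = B_{\mathbf{a}}$ where $\mathbf{a} = (a_n)_{n\in\mathbb{Z}}$ is the constant sequence $a_n = \lambda$, so $|a_n| = |\lambda|$ for every $n$. I would then compute directly, for any fixed $q \in \mathbb{N}$ and any $n > q$,
\begin{equation*}
\prod_{i=1}^{n+q}|a_i^{-1}| \cdot \prod_{j=1}^{n-q}|a_{-j+1}| = |\lambda|^{-(n+q)} \cdot |\lambda|^{n-q} = |\lambda|^{-2q}.
\end{equation*}
This is a positive constant independent of $n$, hence its $\liminf$ as $n \to +\infty$ is $|\lambda|^{-2q} > 0$. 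By part (ii) of the previous theorem, condition \eqref{ccc} fails (for any choice of $q$, in particular), so $\lambda B$ is not supercyclic. Specializing to $\lambda = 1$ gives the non-supercyclicity of $B$ itself.

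There is essentially no obstacle here; the corollary is a direct computational consequence of the characterization \eqref{ccc}, and the only mild subtlety is handling $\lambda = 0$ outside the scope of the theorem, which requires only the infinite-dimensionality of $c_0(\mathbb{Z})$. One could alternatively deduce the $\lambda = 1$ statement from the preceding corollary about symmetric weights (since $|a_n| = 1 = |a_{-n}|$), but doing the computation in the weighted-shift form makes the extension to arbitrary $\lambda \in \mathbb{K}^\times$ uniform.
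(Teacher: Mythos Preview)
Your proof is correct and matches the paper's intended approach: the corollary is stated without proof as an immediate consequence of the characterization \eqref{ccc}, and your direct computation of the product as the constant $|\lambda|^{-2q}$ is exactly the verification that makes this immediate. The only addition you make beyond the paper is the explicit treatment of $\lambda = 0$, which is appropriate and harmless.
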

\begin{cor}\label{corol1}
Let $\textbf{a}$ and $\textbf{b}$ be weighted sequences such that
$|a_n|>|b_n|$ for any $n\in\mathbb Z$. Then
$B_{\textbf{a}+\textbf{b}}$ is hypercyclic (resp. supercyclic) if
and only if $B_\textbf{a}$ is hypercyclic (resp. supercyclic).
\end{cor}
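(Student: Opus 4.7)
The plan is to reduce the statement to the characterization theorem by observing that, in the non-Archimedean setting, the weight sequences $\mathbf{a}$ and $\mathbf{a}+\mathbf{b}$ are indistinguishable at the level of absolute values. Concretely, the non-Archimedean norm's property mentioned in the preliminaries gives, whenever $|a_n|>|b_n|$, the identity $|a_n+b_n|=\max\{|a_n|,|b_n|\}=|a_n|$. In particular $a_n+b_n\neq 0$ for every $n\in\mathbb{Z}$, so $B_{\mathbf{a}+\mathbf{b}}$ is a bona fide bilateral weighted backward shift, and $|(a_n+b_n)^{-1}|=|a_n^{-1}|$ for all $n$.

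Next, I would invoke the preceding theorem characterizing hypercyclicity and supercyclicity of such shifts. Inspecting conditions \eqref{hhh} and \eqref{ccc}, one sees that they depend on the weight sequence only through the absolute values $|a_i^{-1}|$ and $|a_{-j+1}|$, and on no other data. Since these absolute values coincide for $\mathbf{a}$ and $\mathbf{a}+\mathbf{b}$, for every fixed $q\in\mathbb{N}$ we have the equalities
\begin{equation*}
\prod_{i=1}^{n+q}|(a_i+b_i)^{-1}|=\prod_{i=1}^{n+q}|a_i^{-1}|,\qquad \prod_{j=1}^{n-q}|a_{-j+1}+b_{-j+1}|=\prod_{j=1}^{n-q}|a_{-j+1}|,
\end{equation*}
so condition \eqref{hhh} (respectively \eqref{ccc}) holds for $\mathbf{a}+\mathbf{b}$ if and only if it holds for $\mathbf{a}$.

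Applying the theorem to both shifts then yields the claimed equivalence for hypercyclicity and for supercyclicity simultaneously. There is essentially no obstacle; the only point to double-check is that $a_n+b_n\neq 0$ everywhere, which is automatic from $|a_n+b_n|=|a_n|>|b_n|\geq 0$, so $B_{\mathbf{a}+\mathbf{b}}$ is well defined as a weighted shift and the theorem applies. This is a clean illustration of the non-Archimedean principle already used repeatedly in the paper: \emph{a lower-order perturbation is literally invisible to the norm}, and hence to any dynamical invariant that only sees absolute values of weights.
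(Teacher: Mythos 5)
Your proposal is correct and follows exactly the paper's own argument: the non-Archimedean norm's property gives $|a_n+b_n|=|a_n|$, so conditions \eqref{hhh} and \eqref{ccc} hold for $\textbf{a}+\textbf{b}$ if and only if they hold for $\textbf{a}$. Your extra remark that $a_n+b_n\neq 0$ (so the perturbed shift is genuinely a weighted shift) is a small point the paper leaves implicit, but otherwise the two proofs coincide.
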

\begin{proof} By non-Archimedean norm's property
we have $|a_n+b_n|=|a_n|$ for any $n\in\mathbb Z$. Using it to
\eqref{hhh} (resp. \eqref{ccc}) we can conclude that hypercyclicity (supercyclicity)
of $B_{\textbf{a}}$ and $B_{\textbf{a}+\textbf{b}}$ are equivalent.
\end{proof}

\begin{rem} Note that in the real case Corollary \ref{corol1} is not
true.
\end{rem}

Now let us consider a unilateral weighted backward shifts on
$c_0(\mathbb N)$. Recall that operator defined as
$B_\textbf{a}(\textbf{e}_1)=0$ and
$B_\textbf{a}(\textbf{e}_n)=a_{n-1}\textbf{e}_{n-1}$ if $n\geq2$,
is called {\it unilateral} weighted backward shift. Here
$\textbf{a}=(a_n)_{n\in\mathbb N}$ be a bounded sequence of
non-zero numbers of $\mathbb K$. The operator $B_\textbf{a}$ is
called unilateral unweighted backward shift if $a_n=1$ for all
$n\geq1$. We denote by $B$ a unilateral unweighted backward shift
operator.

\begin{thm} Any unilateral weighted backward shift $B_\textbf{a}$ on $c_0(\mathbb N)$ is supercyclic. $B_\textbf{a}$
is hypercyclic iff
\begin{equation}\label{uwhc}
\limsup_{n\to\infty}\prod_{i=1}^n|a_i|=\infty.
\end{equation}
\end{thm}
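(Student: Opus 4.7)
The plan is to treat the universal supercyclicity claim and the two directions of the hypercyclicity equivalence separately, in parallel with the bilateral weighted shift argument.

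For supercyclicity I would apply Theorem \ref{scthm} with $\mathcal{D}_1=\mathcal{D}_2=c_{00}(\mathbb{N})$, $n_k=k$, and the natural right inverses
\[
S_{n_k}(\mathbf{e}_i):=\frac{1}{a_i a_{i+1}\cdots a_{n_k+i-1}}\,\mathbf{e}_{n_k+i}
\]
extended linearly to $c_{00}(\mathbb{N})$. Any $\mathbf{x}\in c_{00}(\mathbb{N})$ with support in $\{1,\dots,N\}$ satisfies $B_\mathbf{a}^{n_k}\mathbf{x}=\mathbf{0}$ once $n_k>N$, so the product $\|B_\mathbf{a}^{n_k}\mathbf{x}\|\cdot\|S_{n_k}\mathbf{y}\|$ vanishes eventually, giving part (1) of Definition \ref{scdef}; part (2) holds by construction since $B_\mathbf{a}^{n_k}S_{n_k}\mathbf{y}=\mathbf{y}$ on $c_{00}(\mathbb{N})$.

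For necessity of \eqref{uwhc}, I would use that $HC(B_\mathbf{a})$ is a dense $G_\delta$ (Corollary \ref{corBt}). Fix $\varepsilon\in(0,1)$, pick $\mathbf{x}\in HC(B_\mathbf{a})$ with $\|\mathbf{x}-\mathbf{e}_1\|_\infty<\varepsilon$, and use density of $O(\mathbf{x},B_\mathbf{a})$ to obtain arbitrarily large $n$ with $\|B_\mathbf{a}^n\mathbf{x}-\mathbf{e}_1\|_\infty<\varepsilon$. Since $(B_\mathbf{a}^n\mathbf{x})_1=x_{n+1}\prod_{j=1}^n a_j$ and $|x_{n+1}|<\varepsilon<1$, the non-Archimedean norm's property forces $|x_{n+1}|\cdot\prod_{j=1}^n|a_j|=1$, hence $\prod_{j=1}^n|a_j|>1/\varepsilon$ for arbitrarily large $n$; letting $\varepsilon\to 0$ yields \eqref{uwhc}.

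For sufficiency I would again apply Theorem \ref{hcthm} with the same $\mathcal{D}_1,\mathcal{D}_2,S_{n_k}$. Conditions (1) and (3) are immediate; the work is condition (2), namely producing a single increasing sequence $(n_k)$ such that
\[
\|S_{n_k}(\mathbf{e}_i)\|_\infty=\frac{\prod_{j=1}^{i-1}|a_j|}{\prod_{j=1}^{n_k+i-1}|a_j|}\longrightarrow 0
\]
for every fixed $i\geq 1$, equivalently $\prod_{j=1}^{n_k+i-1}|a_j|\to\infty$ uniformly in $i$. This is the real obstacle: \eqref{uwhc} only supplies sparse indices $m$ where $\prod_{j=1}^m|a_j|$ is large, and a priori $\prod_{j=1}^{m+1}|a_j|$ could collapse since $|a_{m+1}|$ may be arbitrarily small. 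The remedy is the backward Lipschitz bound forced by boundedness of $\mathbf{a}$: setting $M:=\max\{1,\sup_j|a_j|\}$ and $L_n:=\log\prod_{j=1}^n|a_j|$, one has $L_{n+1}\leq L_n+\log M$, so largeness propagates backward as $L_{m-C}\geq L_m-C\log M$. Using \eqref{uwhc} I would choose $m_k\to\infty$ strictly increasing with $m_{k+1}\geq m_k+2$ and $L_{m_k}\geq k(1+\log M)$, and put $n_k:=m_k-k$; then $(n_k)$ is strictly increasing and, for each fixed $i\geq 1$ and every $k\geq i$,
\[
L_{n_k+i-1}\geq L_{m_k}-(k-i+1)\log M\geq k+(i-1)\log M\longrightarrow\infty,
\]
so condition (2) holds and Theorem \ref{hcthm} delivers hypercyclicity.

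The principal difficulty is thus the sufficiency direction, where the $\limsup$ hypothesis must be upgraded to a single subsequence working uniformly for all basis vectors. The upgrade rests on the elementary observation that bounded weights give one-sided Lipschitz control on $L_n$, exactly the device ($M>\sup_n|b_n|/|b_{n+1}|$) that already drove the bilateral weighted shift proof.
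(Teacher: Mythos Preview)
Your proposal is correct and follows the same strategy as the paper: verify the Supercyclic Criterion on $c_{00}(\mathbb N)$ with the natural right inverse $S_\mathbf{a}$, obtain necessity of \eqref{uwhc} by approximating $\mathbf e_1$ with a hypercyclic vector and reading off the first coordinate, and verify the Hypercyclic Criterion for sufficiency. You are in fact more careful than the paper in the sufficiency step: the paper simply asserts ``from \eqref{uwhc} we obtain $x_{j+k}^{(k)}\to 0$ as $k\to\infty$'' along the full sequence, glossing over the point that a \emph{single} subsequence $(n_k)$ must work for every $\mathbf e_i$, whereas you manufacture such a subsequence via the backward Lipschitz bound $L_{n+1}\le L_n+\log M$ from boundedness of $\mathbf a$---exactly the $M$-device the paper itself uses in its bilateral proof.
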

\begin{proof}
Let $B_\textbf{a}$ be a unilateral weighted backward shift. Let
$\mathcal D_1=\mathcal D_2:= c_{00}(\mathbb N)$ be the set of all
finitely supported sequences. Let $S_\textbf{a}$ be the linear map
defined on $\mathcal D_2$ by
$S_\textbf{a}(\textbf{e}_n)=a_{n}^{-1}\textbf{e}_{n+1}$ and, for
each $k\in\mathbb N$, set $S_k:=S_\textbf{a}^k$. Then, the
Supercyclicity Criterion is satisfied with respect to $k$ because
$\parallel B_\textbf{a}^k(\textbf{x})\parallel=0$ for large enough
$k$ and $B_\textbf{a}^kS_k=I$ on $\mathcal D_2$. According to
Theorem \ref{scthm} operator $B_\textbf{a}$ is supercyclic.

Now we are going to establish that hypercyclicity of
$B_\textbf{a}$ equivalent to \eqref{uwhc}. Suppose first that
\eqref{uwhc} holds, and let us show that $B_\textbf{a}$ satisfies
the Hypercyclicity Criterion. It is enough to show that
$S_k(\textbf{x})\to\textbf{0}$ as $k\to\infty$ for all
$\textbf{x}\in c_{00}(\mathbb N)$. Let $\textbf{x}\in
c_{00}(\mathbb N)\setminus\{\textbf{0}\}$. Then there exists
positive integer $q$ such that $x_q\neq0$ and $x_m=0$ for all
$m>q$. Denote $x_j^{(k)}:=(S_k\textbf{x})_j,\ j=1,2,3,\dots$. We
have $x_j^{(k)}=0$ if $j=\overline{1,k}$ or $j>q+k$, and
$$
x_{j+k}^{k}=\frac{x_j}{\prod\limits_{i=1}^{k}|a_{j+i-1}|},\ \ \ j=\overline{1,q}.
$$
From \eqref{uwhc} we obtain $x_{j+k}^{k}\to0$ as $k\to\infty$.

Let us assume that $B_\textbf{a}$ is hypercyclic, and take an
arbitrary number $\varepsilon>0$. Then the density of hypercyclic
vectors implies the existence of $\textbf{x}\in c_0(\mathbb N)$
and an integer $k>2$ such that
$$
\parallel\textbf{x}-\textbf{e}_1\parallel<\varepsilon\ \ \ \mbox{and}\ \ \ \parallel B_\textbf{a}^k(\textbf{x})-\textbf{e}_1\parallel<\varepsilon.
$$
From these relations, we obtain $|x_{k+1}|<\varepsilon$ and
$\left|\prod_{i=1}^ka_ix_{k+1}-1\right|<\varepsilon$. Again using
the non-Archimedean norm's property from the last inequalities one
finds
$$
\prod_{i=1}^k\left|a_i\right|=\frac{1}{|x_{k+1}|}>\frac{1}{\varepsilon}.
$$
The arbitrariness of $\varepsilon$ yields \eqref{uwhc}. The proof
is complete.
\end{proof}

\begin{cor}
Let $B$ be an unilateral unweighted backward shift on $c_0(\mathbb
N)$. Then the following assertions hold:
\begin{enumerate}
\item[(i)]  An operator $\lambda B$ is supercyclic for any
$\lambda\in\mathbb K^\times$;

\item[(ii)] $\lambda B$ is hypercyclic iff
$$
\limsup_{n\to\infty}\prod_{i=1}^n|\lambda a_i|=\infty.
$$
\end{enumerate}
\end{cor}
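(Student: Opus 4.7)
The plan is to reduce the corollary to the preceding theorem on unilateral weighted backward shifts. Observe that if $B$ denotes the unweighted backward shift, defined by $B\mathbf{e}_1=\mathbf{0}$ and $B\mathbf{e}_n=\mathbf{e}_{n-1}$ for $n\geq 2$, then for any $\lambda\in\mathbb K^\times$ the operator $\lambda B$ satisfies $(\lambda B)(\mathbf{e}_n)=\lambda\mathbf{e}_{n-1}$ for $n\geq 2$ and $(\lambda B)(\mathbf{e}_1)=\mathbf{0}$. Hence $\lambda B$ is itself a unilateral weighted backward shift $B_{\tilde{\mathbf{a}}}$ whose weight sequence is the constant $\tilde a_n\equiv\lambda$, and in particular $\tilde a_n\in\mathbb K^\times$ for all $n$, which is exactly the setting in which the preceding theorem applies.

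Part (i) then follows at once: the theorem asserts that every unilateral weighted backward shift on $c_0(\mathbb N)$ is supercyclic, so $\lambda B=B_{\tilde{\mathbf{a}}}$ is supercyclic for every $\lambda\in\mathbb K^\times$.

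For part (ii), I would specialize the hypercyclicity criterion from the theorem to the constant weight $\tilde{\mathbf{a}}$: $B_{\tilde{\mathbf{a}}}$ is hypercyclic if and only if
\[
\limsup_{n\to\infty}\prod_{i=1}^n|\tilde a_i|=\limsup_{n\to\infty}|\lambda|^n=\infty,
\]
which is clearly equivalent to $|\lambda|>1$, and matches the displayed condition in the statement since $a_i=1$ in the unweighted case gives $|\lambda a_i|=|\lambda|$. There is no substantive obstacle here: the whole argument is the one-line observation that scaling an unweighted backward shift by $\lambda$ produces the weighted backward shift with constant weight $\lambda$, after which both assertions are immediate specializations of the preceding theorem.
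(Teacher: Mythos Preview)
Your proposal is correct and matches the paper's approach: the corollary is stated immediately after the theorem with no explicit proof, so the intended argument is exactly the one-line observation you give, namely that $\lambda B$ is the unilateral weighted backward shift with constant weight sequence $\tilde a_n\equiv\lambda$, whence both assertions are direct specializations of the preceding theorem.
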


\section{$\lambda\textbf{I}+\mu\textbf{B}$ operators on $c_0$}

In this section we are going to consider the following operator
$$
T_{\lambda,\mu}=\lambda I+\mu B,
$$
where $I$ is a identity and $B$ is unweighted backward shift. We
will show that there does not exist pair of $(\lambda,\mu)$ such
that $T_{\lambda,\mu}$ can be supercyclic on $c_0(\mathbb Z)$.
But, for any pair of $(\lambda,\mu)$ with $|\lambda|<|\mu|$ an
operator $T_{\lambda,\mu}$ is supercyclic on $c_0(\mathbb N)$.
Moreover, we will prove that the condition $|\lambda|<|\mu|$ is
necessary for the supercyclicity of $T_{\lambda,\mu}$ on
$c_0(\mathbb N)$.

\begin{thm} The operator $T_{\lambda,\mu}$ on $c_0(\mathbb Z)$ is not supercyclic for all $\lambda,\mu\in\mathbb K$.
\end{thm}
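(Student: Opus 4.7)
Plan: I split on whether $\lambda$ and $\mu$ vanish. If $\mu=0$, then $T_{\lambda,\mu}=\lambda I$ and for any $\textbf{x}\in c_0(\mathbb Z)$ the projective orbit $\mathbb K\cdot O(\textbf{x},T)$ equals the one-dimensional subspace $\mathbb K\textbf{x}$, which cannot be dense in the infinite-dimensional space $c_0(\mathbb Z)$. If $\lambda=0$ and $\mu\ne 0$, then $T_{\lambda,\mu}=\mu B$, which is not supercyclic by the corollary on unweighted bilateral shifts proved earlier. So from now on assume $\lambda,\mu\in\mathbb K^\times$. Since $(\lambda S)^n=\lambda^n S^n$, the projective orbit of $\lambda S$ equals that of $S$, and supercyclicity therefore reduces to that of $T:=I+cB$ with $c=\mu/\lambda\in\mathbb K^\times$.

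The main step is a spectral obstruction built from the adjoint $T^*$ acting on $\ell^\infty(\mathbb Z)\cong c_0(\mathbb Z)^*$. Since $B^*$ is the forward shift, a straightforward computation shows that for every $\nu\in\mathbb K^\times$ with $|\nu|=|c|$ the bounded sequence $\phi^{(\nu)}:=((c/\nu)^n)_{n\in\mathbb Z}\in\ell^\infty(\mathbb Z)$ is an eigenvector of $T^*$ with eigenvalue $1+\nu$. Because the sphere $\{\nu\in\mathbb K:|\nu|=|c|\}$ is infinite, I can choose two distinct parameters $\nu_1,\nu_2$ for which $\lambda_i:=1+\nu_i\ne 0$. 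The linear map $\Phi:=(\phi^{(\nu_1)},\phi^{(\nu_2)}):c_0(\mathbb Z)\to\mathbb K^2$ restricted to $\mathrm{span}(\textbf{e}_0,\textbf{e}_1)$ is given by the matrix
$$\begin{pmatrix}1 & c/\nu_1\\ 1 & c/\nu_2\end{pmatrix},$$
whose determinant $c(\nu_1-\nu_2)/(\nu_1\nu_2)$ is non-zero; hence $\Phi$ is surjective onto $\mathbb K^2$.

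Now suppose for contradiction that some $\textbf{x}\in c_0(\mathbb Z)$ is supercyclic for $T$. If $\phi^{(\nu_i)}(\textbf{x})=0$ for some $i$, then $\phi^{(\nu_i)}$ vanishes on the entire projective orbit $\mathbb K\cdot O(\textbf{x},T)$, hence by continuity on its closure $c_0(\mathbb Z)$; but $\phi^{(\nu_i)}(\textbf{e}_0)=1\ne 0$, a contradiction. So $a:=\phi^{(\nu_1)}(\textbf{x})$ and $b:=\phi^{(\nu_2)}(\textbf{x})$ are both non-zero, and the eigenvalue relation gives
$$\Phi\bigl(\mathbb K\cdot O(\textbf{x},T)\bigr)=\bigcup_{n\geq 0}\mathbb K\cdot(\lambda_1^n a,\,\lambda_2^n b),$$
a countable union of one-dimensional subspaces of $\mathbb K^2$. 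Continuity and surjectivity of $\Phi$ force this union to be dense in $\mathbb K^2$. Assume without loss of generality $|\lambda_1|\geq|\lambda_2|$ (the other case is symmetric) and look at the open set $V_\varepsilon=B(0,\varepsilon)\times B(1,\varepsilon)$ with $0<\varepsilon<|a|/|b|$. For any $\alpha(\lambda_1^n a,\lambda_2^n b)\in V_\varepsilon$ the inequality $|\alpha\lambda_2^n b-1|<\varepsilon<1$ combined with the non-Archimedean norm's property forces $|\alpha\lambda_2^n b|=1$, which then yields
$$|\alpha\lambda_1^n a|=\Bigl(\frac{|\lambda_1|}{|\lambda_2|}\Bigr)^n\frac{|a|}{|b|}\geq\frac{|a|}{|b|}>\varepsilon,$$
contradicting $|\alpha\lambda_1^n a|<\varepsilon$.

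The main obstacle is precisely this last non-density step: the corresponding statement in Archimedean analysis is false (for instance, lines through the origin with rational slopes form a dense union in $\mathbb R^2$), so the proof must exploit a feature unique to the non-Archimedean world. The saving grace is the strong triangle inequality, which pins down $|\alpha\lambda_i^n\cdot(\text{scalar})|$ exactly on the relevant small neighborhoods, so that the rigid geometric sequence $(|\lambda_1|/|\lambda_2|)^n$ cannot approach both ``coordinate axis'' neighborhoods of $(0,1)$ and $(1,0)$ simultaneously.
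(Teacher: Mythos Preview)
Your argument is correct, and it takes a genuinely different route from the paper.

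The paper's proof is entirely coordinate-based. It splits according to whether $|\lambda|\ge|\mu|$ or $|\lambda|<|\mu|$, expands $T_{\lambda,\mu}^n\textbf{x}$ by the binomial formula, and uses the strong triangle inequality to locate, for each nonzero $\textbf{x}$, an index $k$ (or $\ell$) at which the $n$-th iterate has strictly larger modulus than at a neighboring index. That rigid inequality between two specific coordinates persists under any scalar multiple, so the projective orbit misses a fixed ball around some $\textbf{e}_{k+1}$ (or $\textbf{e}_{m+1}$).

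Your approach is spectral: after disposing of the degenerate cases and normalizing to $T=I+cB$, you produce two bounded eigenfunctionals $\phi^{(\nu_1)},\phi^{(\nu_2)}$ of $T^*$ from the infinite sphere $\{|\nu|=|c|\}$, project to $\mathbb K^2$ via $\Phi$, and reduce the question to showing that a countable union of lines $\bigcup_n \mathbb K\cdot(\lambda_1^na,\lambda_2^nb)$ cannot be dense in $\mathbb K^2$. The final ultrametric estimate does the work that, in the Archimedean world, would fail.

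What each buys: the paper's proof is completely elementary---no dual space, no eigenvectors, no appeal to surjections carrying dense sets to dense sets---and it pinpoints explicitly which basis vector the projective orbit avoids. Your argument is cleaner structurally and mirrors the classical ``two adjoint eigenvectors'' obstruction to supercyclicity; it would adapt with essentially no change to any operator on $c_0(\mathbb Z)$ whose adjoint has two unimodular-weight eigen\-sequences, not just $I+cB$. One small point worth making explicit in your write-up: the reason $\Phi$ carries dense sets to dense sets is that it has a continuous linear right inverse (the inverse of $\Phi$ restricted to $\mathrm{span}(\textbf{e}_0,\textbf{e}_1)$), so you do not need to invoke the open mapping theorem.
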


\begin{proof} First, we consider the case $|\lambda|\geq|\mu|$. Take $\textbf{x}\in c_0(\mathbb Z)\setminus\{\textbf{0}\}$. Then there exists
a number $k\in\mathbb N$ such that
$|x_k|>|x_m|$ for all $m>k$.

Denote
$$
x_i^{(n)}:=\left(T_{\lambda,\mu}^n\textbf{x}\right)_i,\ \ \ \ i=0,\pm1,\pm2,\dots
$$
It is easy to get the following recurrence formula
$$
x_i^{(n)}=\lambda^n\sum_{j=0}^n \binom{n}{j}
\mu^j\lambda^{-j}x_{i+j},\ \ \ i=0,\pm1,\pm2,\dots
$$
Due to $| \binom{n}{j}|\leq1,\ j=\overline{1,n}$ and
$|\mu|\leq|\lambda|$, and using the non-Archimedean norm's
property one gets
$$
\left|x_k^{(n)}\right|=\left|\lambda^nx_k\right|
$$
and
$$
\left|x_{k+1}^{(n)}\right|<\left|\lambda^nx_k\right|.
$$
Then for any $\alpha\in\mathbb K^\times$ we get
$$
\left|\alpha x_k^{(n)}\right|>\left|\alpha x_{k+1}^{(n)}\right|.
$$
From this using non-Archimedean norm's property we immediately obtain
$$
\parallel\alpha T^n_{\lambda,\mu}(\textbf{x})-\textbf{e}_{k+1}\parallel\geq1.
$$
It yields that $O(\textbf{x},\alpha T_{\lambda,\mu})\cap
B(\textbf{e}_{k+1},1)=\varnothing$. The arbitrariness of $\alpha$
implies that $c_0(\mathbb Z)\setminus\overline{\mathbb K\cdot
O(\textbf{x},T_{\lambda,\mu})}\neq\varnothing$. Since $\textbf{x}$
is an arbitrary vector we conclude that $T_{\lambda,\mu}$ can not
be supercyclic if $|\lambda|\geq|\mu|$.

Now we assume that $|\mu|>|\lambda|$. Pick a non-zero vector
$\textbf{y}$. We can take an integer number $\ell$ such that
$|y_{\ell}|\geq|y_{i}|$ for all $i>\ell$ and $|y_\ell|>|y_j|$ for
all $j<\ell$. Then for any $k\in\mathbb Z$ we have
\begin{equation}\label{y_k-n}
y_{k-n}^{(n)}=\mu^n\sum_{j=0}^n
\binom{n}{j}\lambda^{n-j}\mu^{j-n}y_{k-n+j}.
\end{equation}
Using the strong triangle inequality one gets
$$
\left|y_{\ell-n}^{(n)}\right|=\left|\mu^ny_{\ell}\right|
$$
Pick an integer number $m$ such that $|y_i|<|y_\ell|$ for all
$i>m$. Then from \eqref{y_k-n} for any $i>m$ we obtain
$$
\left|y_{i}^{(n)}\right|<\left|\mu^ny_{\ell}\right|.
$$
Since
$\left|y_{\ell-n}^{(n)}\right|>\left|y_{m+\ell}^{(n)}\right|$ for
any $\beta\in\mathbb K$ inequality $\left|\beta
y_{\ell-n}^{(n)}\right|<1$ implies $\left|\beta
y_{m+1}^{(n)}-1\right|=1$. It yields that $O(\textbf{y},\beta
T_{\lambda,\mu})\cap B(\textbf{e}_{m+1},1)=\varnothing$ for any
$\beta\in\mathbb K$. Since $\textbf{y}$ is an arbitrary vector we
conclude that $T_{\lambda,\mu}$ can not be supercyclic on
$c_0(\mathbb Z)$ if $|\lambda|<|\mu|$. This completes the proof.
\end{proof}

From Remark \ref{remsubset} we obtain the following

\begin{cor}
The operator $T_{\lambda,\mu}$ on $c_0(\mathbb Z)$ is not
hypercyclic for all $\lambda,\mu\in\mathbb K$.
\end{cor}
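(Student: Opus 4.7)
The plan is to derive this corollary directly from the preceding theorem, which asserts that $T_{\lambda,\mu}$ fails to be supercyclic on $c_0(\mathbb Z)$ for every choice of $\lambda,\mu \in \mathbb K$. Since the statement of the corollary concerns the same class of operators and changes only the dynamical property from supercyclic to hypercyclic, the natural route is to appeal to the general inclusion between these notions rather than to redo any explicit computation on the orbits.

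Concretely, I would invoke Remark \ref{remsubset}, which records that for every $T \in L(X)$ one has $HC(T) \subset SC(T) \subset CC(T)$. Applying this to $T = T_{\lambda,\mu}$ on $X = c_0(\mathbb Z)$ gives $HC(T_{\lambda,\mu}) \subset SC(T_{\lambda,\mu})$. The previous theorem asserts $SC(T_{\lambda,\mu}) = \varnothing$, hence $HC(T_{\lambda,\mu}) = \varnothing$, which is exactly the claim that $T_{\lambda,\mu}$ is not hypercyclic.

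There is essentially no obstacle here: the entire content of the corollary is packaged into the two facts being combined, namely the non-supercyclicity result just proved and the elementary set-theoretic inclusion $HC \subset SC$. The only thing one might wish to emphasize in the write-up is that the inclusion is immediate from the definitions, since any $T$-orbit $O(\mathbf x, T)$ is contained in the projective orbit $\mathbb K \cdot O(\mathbf x, T)$, so density of the former forces density of the latter. With that observation, a one-sentence proof suffices.
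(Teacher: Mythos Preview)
Your proposal is correct and matches the paper's approach exactly: the paper simply writes ``From Remark \ref{remsubset} we obtain the following'' before stating the corollary, invoking the inclusion $HC(T)\subset SC(T)$ together with the preceding non-supercyclicity theorem.
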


Now we consider the operator
$T_{\lambda,\mu}$ on $c_0(\mathbb N)$. We will show that
hypercyclicity of $T_{\lambda,\mu}$ is equivalent to the Hypercyclic Criterion.

\begin{thm}\label{theqhc} For the the operator $T_{\lambda,\mu}$ acting on $c_0(\mathbb N)$ the following statements are
equivalent:
\begin{enumerate}
\item[(i)] $T_{\lambda,\mu}$ satisfies Hypercyclic Criterion;

\item[(ii)] $T_{\lambda,\mu}$ is hypercyclic;

\item[(iii)] $|\lambda|<1<|\mu|$.
\end{enumerate}
\end{thm}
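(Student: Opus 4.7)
I would prove the theorem via the cycle (i)$\Rightarrow$(ii)$\Rightarrow$(iii)$\Rightarrow$(i). The implication (i)$\Rightarrow$(ii) is immediate from Theorem~\ref{hcthm}.

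For (iii)$\Rightarrow$(i), assume $|\lambda|<1<|\mu|$. I would verify the Hypercyclic Criterion with $\mathcal D_1=\mathcal D_2:=c_{00}(\mathbb N)$. Since $|\lambda/\mu|<1$, the Neumann series $(I+\lambda\mu^{-1}F)^{-1}=\sum_{k\geq 0}(-\lambda\mu^{-1})^k F^k$ converges in operator norm on $c_0(\mathbb N)$, where $F$ denotes the forward shift. Set
\[
S:=\mu^{-1}F(I+\lambda\mu^{-1}F)^{-1}=\sum_{k=1}^\infty(-1)^{k-1}\lambda^{k-1}\mu^{-k}F^k,\qquad S_n:=S^n.
\]
Using $BF=I$, a direct check gives $TS=I$, hence $T^nS_n=I$ on $c_{00}(\mathbb N)$. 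The estimate $\|S\|=\max_{k\geq 1}|\lambda|^{k-1}|\mu|^{-k}=|\mu|^{-1}$ (attained at $k=1$ because $|\lambda/\mu|<1$) together with $|\mu|>1$ gives $\|S_n\|\leq|\mu|^{-n}\to 0$, hence $S_n\mathbf y\to\mathbf 0$. Since $(T^n\mathbf e_k)_i=\binom{n}{k-i}\lambda^{n-k+i}\mu^{k-i}$ is nonzero only for $1\leq i\leq k$, one obtains $\|T^n\mathbf e_k\|\leq\max_{0\leq j\leq k-1}|\lambda|^{n-j}|\mu|^j=|\lambda|^{n-k+1}|\mu|^{k-1}\to 0$, and by linearity $T^n\mathbf x\to\mathbf 0$ on all of $c_{00}(\mathbb N)$.

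For (ii)$\Rightarrow$(iii), hypercyclicity gives $\|T\|=\max(|\lambda|,|\mu|)>1$. I first rule out $|\lambda|\geq|\mu|$ by the method of Theorem~5.1: for any nonzero $\mathbf x\in c_0(\mathbb N)$, taking $k$ to be the largest index with $|x_k|=\|\mathbf x\|$, the expansion $(T^n\mathbf x)_k=\sum_{j=0}^n\binom{n}{j}\lambda^{n-j}\mu^j x_{k+j}$ has the $j=0$ term strictly dominating (since $|\lambda|^{n-j}|\mu|^j\leq|\lambda|^n$ and $|x_{k+j}|<|x_k|$ for $j\geq 1$), so the non-Archimedean norm's property yields $|(T^n\mathbf x)_k|=|\lambda|^n|x_k|\geq|x_k|>0$ (since $|\lambda|\geq 1$ is forced by $\|T\|>1$). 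This precludes the orbit from approaching $\mathbf 0$, contradicting hypercyclicity. Hence $|\mu|>|\lambda|$, and $|\mu|=\|T\|>1$.

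The main obstacle is then showing $|\lambda|<1$. Suppose for contradiction that $|\lambda|\geq 1$ with $|\mu|>|\lambda|$. The coordinate-dominance argument above fails here: $|\lambda|^{n-j}|\mu|^j$ is now increasing in $j$, so the $j=0$ term of $(T^n\mathbf x)_k$ no longer dominates and delicate ultrametric cancellations become possible. My plan is to exploit two structural facts: first, $\mathbf e_1$ is an eigenvector of $T$ with eigenvalue $\lambda$, $|\lambda|\geq 1$, so its own orbit $\{\lambda^n\mathbf e_1\}$ stays bounded away from $\mathbf 0$; second, the right-inverse $S$ from above is still well-defined, with $\|S\|=|\mu|^{-1}<1$. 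For $\mathbf x\in HC(T)$ with $T^{n_k}\mathbf x\to\mathbf 0$ along a subsequence, the decomposition $\mathbf x=S^{n_k}T^{n_k}\mathbf x+(\mathbf x-S^{n_k}T^{n_k}\mathbf x)$ combined with $\|S^{n_k}\|\to 0$ places $\mathbf x$ in $\overline{\bigcup_n\ker T^n}$. Combined with the requirement that the same orbit must also approximate $\mathbf e_1$ (so that subsequent iterates pass near vectors $\lambda^m\mathbf e_1$ of norm $|\lambda|^m\geq 1$), one aims at a contradiction through a careful enumeration of the admissible non-Archimedean cancellation patterns in $(T^n\mathbf x)_k=\lambda^nx_k+\sum_{j\geq 1}\binom{n}{j}\lambda^{n-j}\mu^jx_{k+j}$; this cancellation analysis is where the real technical work lies.
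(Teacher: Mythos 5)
Your implications (i)$\Rightarrow$(ii) and (iii)$\Rightarrow$(i), as well as the first half of (ii)$\Rightarrow$(iii), are correct and essentially coincide with the paper's own argument: your right inverse $S=\mu^{-1}F(I+\lambda\mu^{-1}F)^{-1}$ is exactly the operator $S_{\mu,\lambda}$ of \eqref{S_m,l} used in Lemma \ref{lem2}, and your coordinate-dominance argument excluding $|\mu|\leq|\lambda|$ is Lemma \ref{lem1}. The genuine gap is the remaining case $1\leq|\lambda|<|\mu|$, i.e.\ the claim that $|\lambda|\geq 1$ already kills hypercyclicity; this is the hardest part of (ii)$\Rightarrow$(iii) and you only offer a plan for it, acknowledging that ``the real technical work'' is still to be done. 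Moreover, the ingredients of the plan do not obviously lead to a contradiction: the decomposition $\mathbf{x}=S^{n_k}T^{n_k}\mathbf{x}+(\mathbf{x}-S^{n_k}T^{n_k}\mathbf{x})$ only places a hypercyclic vector in $\overline{\bigcup_n\ker T_{\lambda,\mu}^n}$, and you give no argument that this closure is a proper subset of $c_0(\mathbb N)$ (note that $\ker T_{\lambda,\mu}$ is spanned by the geometric vector $((-\lambda/\mu)^{i-1})_{i\geq1}$, not by $\mathbf{e}_1$), nor any way in which the eigenvector relation $T_{\lambda,\mu}\mathbf{e}_1=\lambda\mathbf{e}_1$ constrains the orbit of a different vector; the ``enumeration of cancellation patterns'' is left entirely unspecified.

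The paper closes this case by a quite different and concrete construction (Lemma \ref{lem3}): with $r=|\lambda|/|\mu|<1$ one takes the non-empty open set $U=\{\mathbf{x}\in c_0(\mathbb N):\ r^{3^k+2}<|x_k|<r^{3^k},\ k\in\mathbb N\}$. The lacunary exponents give $|\mu\lambda^{-1}x_{k+1}|<r^{3^{k+1}-1}<r^{3^k+2}<|x_k|$, so by the non-Archimedean norm's property $|\lambda x_k+\mu x_{k+1}|=|\lambda x_k|$ for every $k$, whence $T_{\lambda,\mu}(U)\subset\lambda U$ and $T_{\lambda,\mu}^n(U)\subset\lambda^n U$. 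Every $\mathbf{y}\in\lambda^n U$ satisfies $|y_1|>|y_2|$, while every $\mathbf{y}$ in the ball $V=\{\|\mathbf{x}-\mathbf{e}_2\|<1\}$ satisfies $|y_2|=1>|y_1|$; hence $T_{\lambda,\mu}^n(U)\cap V=\varnothing$ for all $n$, so $T_{\lambda,\mu}$ is not topologically transitive and, by Theorem \ref{Btthm}, not hypercyclic. You need this construction, or a complete substitute for it, before your proof of (ii)$\Rightarrow$(iii) can be accepted.
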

To prove the theorem we first prove three auxiliary lemmas.

\begin{lem}\label{lem1}
If the operator $T_{\lambda,\mu}$ acting on $c_0(\mathbb N)$ is
hypercyclic then $|\mu|>|\lambda|$ and $|\mu|>1$.
\end{lem}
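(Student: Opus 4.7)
The plan is to establish the two inequalities separately, exploiting the binomial expansion
\[
(T_{\lambda,\mu}^n\textbf{x})_i=\sum_{j=0}^n \binom{n}{j}\lambda^{n-j}\mu^j x_{i+j},\qquad i\in\mathbb N,
\]
together with the non-Archimedean norm's property and the ultrametric bound $|\binom{n}{j}|\le 1$ (an integer always has absolute value at most $1$ under a non-Archimedean valuation).

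For $|\mu|>|\lambda|$ I would argue by contradiction. Assuming $|\lambda|\ge|\mu|$, fix any non-zero $\textbf{x}\in c_0(\mathbb N)$. Since $|x_n|\to 0$, there is a largest index $k$ with $|x_k|=\|\textbf{x}\|$, so $|x_m|<|x_k|$ for every $m>k$. At $i=k$ the $j=0$ summand of the expansion is $\lambda^n x_k$ with absolute value $|\lambda|^n|x_k|$, while for $j\ge 1$ we have $|\lambda^{n-j}\mu^j x_{k+j}|\le|\lambda|^n|x_{k+j}|<|\lambda|^n|x_k|$; hence by the strong triangle inequality $|(T_{\lambda,\mu}^n\textbf{x})_k|=|\lambda|^n|x_k|$. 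At $i=k+1$ every summand has $|x_{k+1+j}|<|x_k|$, so $|(T_{\lambda,\mu}^n\textbf{x})_{k+1}|<|\lambda|^n|x_k|=|(T_{\lambda,\mu}^n\textbf{x})_k|$. Consequently, if some $T_{\lambda,\mu}^n\textbf{x}$ lay in the open ball of radius $1$ around $\textbf{e}_{k+1}$, then the $(k+1)$-th coordinate estimate combined with the non-Archimedean norm's property would force $|(T_{\lambda,\mu}^n\textbf{x})_{k+1}|=1$, making $|(T_{\lambda,\mu}^n\textbf{x})_k|>1$ and contradicting the $k$-th coordinate bound. Hence $\textbf{x}\notin HC(T_{\lambda,\mu})$, and since $\textbf{x}$ was arbitrary, $T_{\lambda,\mu}$ cannot be hypercyclic.

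To obtain $|\mu|>1$, I would invoke the observation (noted in Section 2) that every hypercyclic operator on a non-Archimedean Banach space must have norm strictly greater than $1$. Having established $|\mu|>|\lambda|$, the bound $\|B\|\le 1$ yields $\|T_{\lambda,\mu}\|\le\max(|\lambda|,|\mu|)=|\mu|$, and so $|\mu|>1$. The norm-remark itself is immediate: if $\|T\|\le 1$ then $\|T^n\textbf{x}\|\le\|\textbf{x}\|$ for every $n$, confining each orbit to a bounded ball, which cannot be dense in the unbounded space $c_0(\mathbb N)$.

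The only delicate point is the first step: one must pick the correct index $k$ (the last position where $|x_k|$ attains the supremum) and verify that the $j=0$ term of the binomial expansion is uniquely dominant at $i=k$ but not at $i=k+1$. The ultrametric inequality $|\binom{n}{j}|\le 1$, which has no real analogue, is what makes this dominance argument work; the remaining coordinate bookkeeping is routine.
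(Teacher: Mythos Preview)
Your argument is correct. The core computation---the identity $|(T_{\lambda,\mu}^n\textbf{x})_k|=|\lambda|^n|x_k|$ when $|\lambda|\ge|\mu|$, obtained from the binomial expansion and the ultrametric bound $|\binom{n}{j}|\le 1$---is exactly what the paper uses. The difference is in how the contradiction is extracted. The paper first records $\|T_{\lambda,\mu}\|>1$, so under the hypothesis $|\mu|\le|\lambda|$ it has $|\lambda|>1$; then $|(T_{\lambda,\mu}^n\textbf{x})_k|=|\lambda|^n|x_k|\ge|x_k|$ shows the orbit never enters a small ball about $\textbf{0}$. You instead compare the $k$-th and $(k{+}1)$-th coordinates and show the orbit never enters the unit ball about $\textbf{e}_{k+1}$; this is the argument the paper reserves for the \emph{supercyclic} analogue. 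Your route has the minor advantage that it does not need $|\lambda|>1$ in the first step, so the two conclusions decouple cleanly: $|\mu|>|\lambda|$ comes from the coordinate comparison alone, and $|\mu|>1$ then follows from $\|T_{\lambda,\mu}\|\le\max(|\lambda|,|\mu|)=|\mu|$ together with the norm remark. Both approaches are equally short.
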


\begin{proof}
Assume that $T_{\lambda,\mu}$ be hypercyclic. We immediately get
that $\parallel T_{\lambda,\mu}\parallel>1$. Using the
non-archimedean norm's property one finds
$\max\{|\lambda|,|\mu|\}>1$. Let us suppose that
$|\mu|\leq|\lambda|$. Take $\textbf{x}\in HC(T_{\lambda,\mu})$.
Since the vector $\textbf{x}$ is not zero, then there exists a
number $k\in\mathbb N$ such that $|x_k|>|x_m|$ for all $m>k$.

It is easy to get the following recurrence formula
$$
x_i^{(n)}=\lambda^n\sum_{j=0}^n\binom{n}{j}\left(\frac{\mu}{\lambda}\right)^jx_{i+j},\
\ \ i=1,2,3,\dots
$$
From $|\binom{n}{j}|\leq1,\ j=\overline{1,n}$ and
$|\mu|\leq|\lambda|$, by means of the non-Archimedean norm's
property one gets
$$
\left|x_k^{(n)}\right|=\left|\lambda^nx_k\right|.
$$
From $|\lambda|>1$ we get $\left|x_k^{(n)}\right|>|x_k|$.
Hence,
$$
\parallel T_{\lambda,\mu}^n(\textbf{x})\parallel>|x_k|>0.
$$
Then $O(\textbf{x},T_{\lambda,\mu})\cap
B(0,\varepsilon)=\varnothing$ for any positive
$\varepsilon<|x_k|$. This means that $\textbf{x}\not\in
HC(T_{\lambda,\mu})$. Thus, we have shown that $T_{\lambda,\mu}$
cannot be hypercyclic if $|\mu|\leq|\lambda|$. From this fact and
$\max\{|\lambda|,|\mu|\}>1$ we get $|\mu|>1$.
\end{proof}

\begin{lem}\label{lem2} Let $|\mu|>1$. Then $T_{\lambda,\mu}$ acting on $c_0(\mathbb N)$ is hypercyclic if $|\lambda|<1$.
\end{lem}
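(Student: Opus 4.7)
The plan is to verify the Hypercyclic Criterion (in fact, Kitai's Criterion) with $n_k=k$, $\mathcal D_1=\mathcal D_2=c_{00}(\mathbb N)$, and $S_k:=S^k$, where $S$ is a bounded right inverse of $T_{\lambda,\mu}$ constructed as a Neumann-type series in the forward shift $F$, $F(\textbf{e}_j):=\textbf{e}_{j+1}$. Hypercyclicity of $T_{\lambda,\mu}$ then follows from Theorem \ref{hcthm}.

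For the construction I would set
$$
S := \sum_{k=0}^\infty \frac{1}{\mu}\left(-\frac{\lambda}{\mu}\right)^{\!k} F^{k+1}.
$$
Since $|\lambda|<1<|\mu|$ gives $|\lambda/\mu|<1$, and $F$ is a non-Archimedean isometry with $\|F^{k+1}\|=1$, the ultrametric inequality yields convergence in operator norm with $\|S\|\leq 1/|\mu|<1$. A direct computation on each basis vector $\textbf{e}_j$ shows $T_{\lambda,\mu}S(\textbf{e}_j)=\textbf{e}_j$: the $\lambda I$ and $\mu B$ summands of $T_{\lambda,\mu}$ contribute coefficients $\lambda/\mu$ and $-\lambda/\mu$ in every slot beyond $\textbf{e}_j$, which cancel, while the leading $\mu B$ term produces $\textbf{e}_j$ itself. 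Hence $T_{\lambda,\mu}S=I$ on $c_0(\mathbb N)$, and by the associative telescoping
$$
T_{\lambda,\mu}^{\,n}S^{n} \;=\; T_{\lambda,\mu}^{\,n-1}\bigl(T_{\lambda,\mu}S\bigr)S^{n-1} \;=\; T_{\lambda,\mu}^{\,n-1}S^{n-1} \;=\; \cdots \;=\; I.
$$

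With these tools Kitai's Criterion is essentially immediate. Condition (3), $T_{\lambda,\mu}^{k}S^{k}(\textbf{y})=\textbf{y}$, is the identity just established. Condition (2) follows from $\|S^{k}\textbf{y}\|\leq\|\textbf{y}\|/|\mu|^{k}\to 0$. For condition (1) I would expand
$$
T_{\lambda,\mu}^{\,n}(\textbf{e}_j) \;=\; \sum_{k=0}^{\min(n,\,j-1)} \binom{n}{k}\lambda^{n-k}\mu^{k}\,\textbf{e}_{j-k},
$$
and apply the standard non-Archimedean bound $|\binom{n}{k}|\leq 1$ together with $|\mu|^{k}\leq|\mu|^{j-1}$ for $k\leq j-1$ to obtain $\|T_{\lambda,\mu}^{\,n}(\textbf{e}_j)\|\leq|\lambda|^{n-j+1}|\mu|^{j-1}\to 0$ as $n\to\infty$; linearity extends this to every $\textbf{x}\in c_{00}(\mathbb N)$.

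The single delicate point is the construction of $S$: one must ensure not only that the defining series converges in operator norm (which is secured by $|\lambda/\mu|<1$ and the ultrametric inequality), but also that $S$ is defined on all of $c_0(\mathbb N)$, so that $S^{n}$ makes sense in condition (2) and $T_{\lambda,\mu}^{\,n}S^{n}=I$ holds globally. Once this is in hand, the rest of the argument reduces to routine non-Archimedean estimates, and the proof in effect establishes the stronger statement that $T_{\lambda,\mu}$ satisfies Kitai's Criterion whenever $|\lambda|<1<|\mu|$.
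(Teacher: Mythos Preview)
Your proof is correct and follows essentially the same route as the paper. Your operator $S=\sum_{k\geq 0}\mu^{-1}(-\lambda/\mu)^{k}F^{k+1}$ is precisely the paper's right inverse $S_{\mu,\lambda}$ (written coordinatewise there as $(S_{\mu,\lambda}\textbf{x})_i=\mu^{-1}\sum_{j=1}^{i-1}(-\lambda/\mu)^{j-1}x_{i-j}$), and both proofs verify Kitai's Criterion on $c_{00}(\mathbb N)$ via the same estimates $\|S^{n}\|\leq|\mu|^{-n}$ and $\|T_{\lambda,\mu}^{\,n}\textbf{e}_j\|\leq|\lambda|^{n-j+1}|\mu|^{j-1}$; your write-up simply makes the latter bound explicit where the paper says ``it is clear''.
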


\begin{proof} Let $|\lambda|<1<|\mu|$. We define the operator $S_{\mu,\lambda}$ as follows
\begin{equation}\label{S_m,l}
\begin{array}{ll}
\left(S_{\mu,\lambda}\textbf{x}\right)_1=0\\[2mm]
\left(S_{\mu,\lambda}\textbf{x}\right)_i=\frac{1}{\mu}\left(\sum\limits_{j=1}^{i-1}\left(\frac{-\lambda}{\mu}\right)^{j-1}x_{i-j}\right), \ \ \ \ i=2,3,4,\dots
\end{array}
\end{equation}
Then one has $T_{\lambda,\mu}S_{\mu,\lambda}=I$. Let
$\textbf{x}\in c_{00}$. It is clear that
$T_{\lambda,\mu}^n(x)\to0$ as $n\to\infty$. It follows from the
strong triangle inequality that
$$\parallel S_{\mu,\lambda}^n(\textbf{x})\parallel\leq\frac{1}{|\mu^n|}\parallel\textbf{x}\parallel.$$
Since $|\mu|>1$ we obtain $S_{\mu,\lambda}^n(\textbf{x})\to0$ as
$n\to\infty$. Hence, the operator $T_{\lambda,\mu}$ satisfies the
Hypercyclic Criterion, therefore, Theorem \ref{hcthm} implies that
$T_{\lambda,\mu}$ is a hypercyclic.
\end{proof}

\begin{lem}\label{lem3} Let $1\leq|\lambda|<|\mu|$. Then $T_{\lambda,\mu}$ acting on $c_0(\mathbb N)$ is not hypercyclic.
\end{lem}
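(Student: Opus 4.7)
My plan is to assume, for contradiction, that $\textbf{x}\in c_0(\mathbb{N})$ is a hypercyclic vector for $T:=T_{\lambda,\mu}$ and derive a contradiction by exhibiting, for every $n\geq 0$, a coordinate of $T^n\textbf{x}$ whose modulus is bounded below by a positive constant; this is incompatible with the existence of a subsequence $n_j$ with $T^{n_j}\textbf{x}\to\textbf{0}$ that the density of the orbit would require. Since $\textbf{x}\neq\textbf{0}$ and $|x_m|\to 0$, I may let $k$ be the largest index with $|x_k|>|x_m|$ for all $m>k$. The strategy imitates the coordinate-chasing computation of Lemma~\ref{lem1}, but the weights $\theta:=|\lambda|/|\mu|\in(0,1)$ now force us to track a coordinate $i$ that varies with $n$ rather than the fixed $i=k$.

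For $0\le n\le k-1$ I would look at the coordinate $i=k-n\geq 1$. Expanding
$(T^n\textbf{x})_{k-n}=\sum_{j=0}^n\binom{n}{j}\lambda^{n-j}\mu^j x_{k-n+j}$,
the $j=n$ term is $\mu^n x_k$ of modulus exactly $|\mu|^n|x_k|$. For each $j<n$, the term has modulus at most
$|\lambda|^{n-j}|\mu|^j|x_{k-n+j}|=|\mu|^n\theta^{n-j}|x_{k-n+j}|$,
and since $\theta^{n-j}<1$ and $|x_{k-n+j}|\le|x_k|$ (by maximality of $|x_k|$), this is strictly smaller than $|\mu|^n|x_k|$. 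The non-archimedean norm property then yields $|(T^n\textbf{x})_{k-n}|=|\mu|^n|x_k|$, hence $\|T^n\textbf{x}\|\ge|\mu|^n|x_k|\ge|x_k|>0$ because $|\mu|>|\lambda|\ge 1$.

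For $n\ge k$ I would pass to the auxiliary vector $\textbf{y}:=T^{k-1}\textbf{x}$. Specialising the previous calculation to $n=k-1$ with $i=1$ gives $|y_1|=|\mu|^{k-1}|x_k|$, and a parallel coordinate-by-coordinate analysis produces $|y_i|<|y_1|$ strictly for every $i\ge 2$. Writing
$(T^m\textbf{y})_1=\sum_{j=0}^m\binom{m}{j}\lambda^{m-j}\mu^j y_{1+j}$,
the $j=0$ term has modulus $|\lambda|^m|y_1|$, and each $j\ge 1$ term has modulus at most $|\lambda|^m\theta^{-j}|y_{1+j}|$. I would establish, using the structure of $\textbf{y}$ inherited from $\textbf{x}$ through the explicit formula for $(T^{k-1}\textbf{x})_i$, the quantitative decay $|y_{1+j}|<\theta^j|y_1|$ for $j\ge 1$, so that each higher-$j$ term is strictly dominated. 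The non-archimedean property then gives $|(T^m\textbf{y})_1|=|\lambda|^m|y_1|$, and since $|\lambda|\ge 1$ we conclude $\|T^{k-1+m}\textbf{x}\|\ge|y_1|=|\mu|^{k-1}|x_k|>0$ for every $m\ge 0$, completing the contradiction with hypercyclicity.

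The main obstacle is the last step: upgrading $|y_i|<|y_1|$ to the sharper decay $|y_{1+j}|<\theta^j|y_1|$, which is what lets the $j=0$ term dominate $(T^m\textbf{y})_1$. This requires feeding the strict-max hypothesis on $\textbf{x}$ through the non-archimedean bounds in the expansion of $(T^{k-1}\textbf{x})_{1+j}$ term by term, and leveraging $|\lambda|\geq 1$ so that the comparison between the leading $\lambda^m y_1$ and the perturbations indexed by $j\ge 1$ is preserved under iteration.
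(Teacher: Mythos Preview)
There is a genuine gap at precisely the point you flag as the ``main obstacle'': the strict decay $|y_{1+j}|<\theta^{\,j}|y_1|$ is not true in general, and your computation uses no feature of $\textbf{x}$ beyond $\textbf{x}\neq\textbf{0}$. Take $x_i=(-\lambda/\mu)^{i}$; then $|x_i|=\theta^{\,i}$, so $k=1$, $\textbf{y}=T^{0}\textbf{x}=\textbf{x}$, and $|y_{1+j}|=\theta^{\,1+j}=\theta^{\,j}|y_1|$ with \emph{equality}. For this vector one actually has $T_{\lambda,\mu}\textbf{x}=\textbf{0}$, hence $\|T_{\lambda,\mu}^{n}\textbf{x}\|=0$ for every $n\ge 1$; the uniform positive lower bound on $\|T_{\lambda,\mu}^{n}\textbf{x}\|$ that your whole strategy is designed to produce is therefore false for some nonzero $\textbf{x}\in c_0(\mathbb N)$. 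Since nothing in your argument distinguishes a putative hypercyclic vector from this one, it cannot be completed along the lines you indicate. Concretely, with only $|y_{1+j}|\le\theta^{\,j}|y_1|$ the terms $\binom{m}{j}\lambda^{m-j}\mu^{j}y_{1+j}$ for $j\ge 1$ in $(T^{m}\textbf{y})_1$ may have modulus equal to that of the leading term $\lambda^{m}y_1$, and in an ultrametric field they can cancel it exactly; no lower bound on $|(T^{m}\textbf{y})_1|$ follows.

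The paper sidesteps this difficulty by a completely different mechanism: rather than bounding orbits away from $\textbf{0}$, it produces nonempty open sets $U,V\subset c_0(\mathbb N)$ with $T_{\lambda,\mu}^{n}(U)\cap V=\varnothing$ for every $n$, contradicting topological transitivity. The set $U$ is cut out by the super-geometric decay $r^{3^{k}+2}<|x_k|<r^{3^{k}}$ (with $r=\theta$), chosen precisely so that the ultrametric inequality forces $|\lambda x_k+\mu x_{k+1}|=|\lambda x_k|$ for every $k$, i.e.\ $T_{\lambda,\mu}(U)\subset\lambda U$; every element of $\lambda^{n}U$ then has first coordinate of strictly larger modulus than its second, which keeps it outside $V=B(\textbf{e}_2,1)$. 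In other words, the paper \emph{builds in} the strict-faster-than-$\theta^{k}$ decay you were hoping to extract, by choosing the open set $U$ accordingly rather than trying to derive such decay from an arbitrary hypercyclic vector.
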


\begin{proof}
Let $1\leq|\lambda|<|\mu|$. Denote $r=\frac{|\lambda|}{|\mu|}$. Take an open subset $U$ of $c_0(\mathbb N)$ as follows:
$$
U:=\left\{\textbf{x}\in c_0(\mathbb N): r^{3^k+2}<|x_k|<r^{3^{k}},\ k\in\mathbb N\right\}.
$$
Let $\textbf{x}\in U$. We need compute the norm of $\lambda x_k+\mu x_{k+1},\ k\geq1$. Since
$$
\begin{array}{ll}|x_k|>r^{3^k+2},\\[3mm]
\left|\mu\lambda^{-1}x_{k+1}\right|=r^{-1}|x_{k+1}|<r^{3^{k+1}-1}
\end{array}
$$
and using the non-Archimedean norm's property we obtain
$$
|\lambda x_k+\mu x_{k+1}|=|\lambda x_k|\ \ \ \mbox{for all }\ k\in\mathbb N.
$$
From this one gets $T_{\lambda,\mu}(U)\subset \lambda U$. Hence,
$T_{\lambda,\mu}^n(U)\subset \lambda^n U$ for any $n\in\mathbb N$.
Pick an open ball $V:=\{\textbf{x}\in c_0(\mathbb N):
|x-e_2|<1\}$. We will show that $T_{\lambda,\mu}^n(U)\cap
V=\varnothing$ for any $n\in\mathbb N$. Assume that there exists a
positive integer $n$ such that $\lambda^n U\cap V\neq\varnothing$.
Let $\textbf{y}\in\lambda^n U\cap V$. Then we have $|y_1|<1$ and
$|y_2-1|<1$. From the last inequality with the non-Archimedean
norm's property one gets $|y_2|=1$. Due to $\textbf{y}\in\lambda^n
U$ we have $|y_1|>|y_2|=1$. It is a contradiction to $|y_1|<1$.
This contradiction shows that $\lambda^n U\cap V=\varnothing$ for
all $n\geq1$. According to $T_{\lambda,\mu}^n(U)\subset\lambda^nU$
and $\lambda^n U\cap V=\varnothing$ one immediately finds that
$T_{\lambda,\mu}^n(U)\cap V=\varnothing$ for any $n\in\mathbb N$.
This means that $T_{\lambda,\mu}$ is not topologically transitive.
According to Theorem \ref{Btthm} the operator $T_{\lambda,\mu}$ is
not hypercyclic.
\end{proof}

\textit{Proof of Theorem \ref{theqhc}} The implication
(i)$\Rightarrow $(ii) follows from Theorem \ref{hcthm}. Lemmas
\ref{lem1} and \ref{lem3} imply that the implication
(ii)$\Rightarrow $(iii). Finally, (iii)$\Rightarrow $(i) follows
from Lemma \ref{lem2}. The completes the proof.

\begin{rem}
According to Theorem \ref{theqhc} an operator $I+\mu B$ on
$c_0(\mathbb N)$ can not be hypercyclic for any $\mu\in\mathbb K$.
But, in real case \cite{Shk1} it is hypercyclic for
$|\mu|_\infty>1$, where $|\cdot|_\infty$ is absolute value.
\end{rem}

Now we will study supercyclicity of $T_{\lambda,\mu}$. Similarly
to the hypercyclic case we have the following
\begin{thm}
For the operator $T_{\lambda,\mu}$ acting on $c_0(\mathbb N)$ the
following statements are equivalent:
\begin{enumerate}
\item[(i)] $T_{\lambda,\mu}$ satisfies Supercyclic Criterion;
\item[(ii)] $T_{\lambda,\mu}$ is supercyclic;

\item[(iii)] $|\lambda|<|\mu|$.
\end{enumerate}
\end{thm}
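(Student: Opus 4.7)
The plan is to mirror Theorem \ref{theqhc}: the implication (i)$\Rightarrow$(ii) follows directly from Theorem \ref{scthm}, while (iii)$\Rightarrow$(i) will be verified via the Supercyclic Criterion and (ii)$\Rightarrow$(iii) via a non-Archimedean obstruction valid for every vector and every scalar.

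For (iii)$\Rightarrow$(i) I reuse the right inverse $S_{\mu,\lambda}$ defined by \eqref{S_m,l} in Lemma \ref{lem2}, which satisfies $T_{\lambda,\mu}S_{\mu,\lambda}=I$. The bound $\|S_{\mu,\lambda}^n\textbf{y}\|\leq|\mu|^{-n}\|\textbf{y}\|$ derived there relies only on the strong triangle inequality together with $|\lambda/\mu|\leq 1$, so it remains valid under the weaker hypothesis $|\lambda|<|\mu|$. Taking $\mathcal D_1=\mathcal D_2:=c_{00}(\mathbb N)$ and $S_n:=S_{\mu,\lambda}^n$, condition (2) of the Supercyclic Criterion holds trivially since $T_{\lambda,\mu}^n S_{\mu,\lambda}^n=I$. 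For condition (1), if $\textbf{x}$ is supported in $\{1,\dots,k\}$, the binomial expansion
\[ (T_{\lambda,\mu}^n\textbf{x})_i=\sum_{j=0}^{k-i}\binom{n}{j}\lambda^{n-j}\mu^j x_{i+j},\qquad 1\leq i\leq k, \]
combined with $|\binom{n}{j}|\leq 1$ and $|\lambda|\leq|\mu|$, yields $\|T_{\lambda,\mu}^n\textbf{x}\|\leq|\lambda|^{n-k+1}|\mu|^{k-1}\|\textbf{x}\|$ for $n\geq k$. Therefore
\[ \|T_{\lambda,\mu}^n\textbf{x}\|\cdot\|S_{\mu,\lambda}^n\textbf{y}\|\leq(|\lambda|/|\mu|)^{n-k+1}\|\textbf{x}\|\|\textbf{y}\|\longrightarrow 0, \]
giving the Supercyclic Criterion and hence (ii) via Theorem \ref{scthm}.

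For (ii)$\Rightarrow$(iii) I argue by contrapositive. Assume $|\lambda|\geq|\mu|$; if $\lambda=0$ then $\mu=0$ and $T_{\lambda,\mu}=0$ is trivially not supercyclic, so assume $\lambda\neq 0$. Fix any nonzero $\textbf{x}\in c_0(\mathbb N)$ and let $k$ be the largest index with $|x_k|=\|\textbf{x}\|$, so $|x_m|<|x_k|$ for every $m>k$. Expanding $x_\ell^{(n)}:=(T_{\lambda,\mu}^n\textbf{x})_\ell$ in the same way and using $|\binom{n}{j}|\leq 1$ with $|\mu|\leq|\lambda|$, every summand with $j\geq 1$ has modulus at most $|\lambda|^n|x_{\ell+j}|$. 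For $\ell=k$ only the $j=0$ term attains modulus $|\lambda^n x_k|$ while every other term is strictly smaller, so the non-Archimedean norm property yields $|x_k^{(n)}|=|\lambda^n x_k|$. For $\ell=k+1$ every summand is strictly below $|\lambda^n x_k|$, so $|x_{k+1}^{(n)}|<|x_k^{(n)}|$. Consequently $|\alpha x_k^{(n)}|>|\alpha x_{k+1}^{(n)}|$ for every $\alpha\in\mathbb K^\times$. If $\|\alpha T_{\lambda,\mu}^n\textbf{x}-\textbf{e}_{k+1}\|<1$, the $(k+1)$-th coordinate forces $|\alpha x_{k+1}^{(n)}|=1$, hence $|\alpha x_k^{(n)}|>1$, contradicting the $k$-th coordinate constraint $|\alpha x_k^{(n)}|<1$. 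Thus $\mathbb K\cdot O(\textbf{x},T_{\lambda,\mu})$ misses the open ball $B(\textbf{e}_{k+1},1^-)$ and $\textbf{x}\notin SC(T_{\lambda,\mu})$; as $\textbf{x}$ was arbitrary, $T_{\lambda,\mu}$ is not supercyclic.

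The principal obstacle is the (ii)$\Rightarrow$(iii) step, since the obstruction must block approximation uniformly over the entire scalar family $\mathbb K^\times$. The rigid non-Archimedean strict inequality $|x_{k+1}^{(n)}|<|x_k^{(n)}|$, preserved under multiplication by any $\alpha$, is precisely what supplies that uniformity and has no real-variable analogue; once it is in hand, the forbidden open ball $B(\textbf{e}_{k+1},1^-)$ is produced essentially for free.
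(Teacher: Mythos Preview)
Your proof is correct and follows essentially the same route as the paper's own argument: the implication (i)$\Rightarrow$(ii) via Theorem~\ref{scthm}, the Supercyclic Criterion verification for (iii)$\Rightarrow$(i) using the right inverse $S_{\mu,\lambda}$ from \eqref{S_m,l} together with the binomial estimate $\|T_{\lambda,\mu}^n\textbf{x}\|\leq|\lambda|^{n-k+1}|\mu|^{k-1}\|\textbf{x}\|$ on $c_{00}$, and for (ii)$\Rightarrow$(iii) the non-Archimedean obstruction $|x_k^{(n)}|=|\lambda^n x_k|>|x_{k+1}^{(n)}|$ that prevents the projective orbit from approaching $\textbf{e}_{k+1}$. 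Your presentation is slightly tidier in that you handle the degenerate case $\lambda=\mu=0$ explicitly and phrase (ii)$\Rightarrow$(iii) as a contrapositive over all vectors rather than by choosing $\textbf{x}\in SC(T_{\lambda,\mu})$, but the mathematics is the same.
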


\begin{proof}
The implication (i)$\Rightarrow$(ii) follows from Theorem
\ref{scthm}. We will establish the implications
(ii)$\Rightarrow$(iii)$\Rightarrow$(i).

(ii)$\Rightarrow$(iii) Let assume that $T_{\lambda,\mu}$ is a
supercyclic. Suppose that $|\mu|\leq|\lambda|$. Let $\textbf{x}\in
SC(T_{\lambda,\mu})$. Due to  $\textbf{x}\neq\textbf{0}$ there
exists a positive integer $k$ such that $|x_k|>|x_m|$ for all
$m>k$. Denote
$x_i^{(n)}:=\left(T^n_{\lambda,\mu}\textbf{x}\right)_i$. Then for
any $n\geq1$ using the non-Archimedean norm's property we get
\begin{equation}\label{x_k}
\left|x_k^{(n)}\right|=\left|\lambda^nx_k\right|
\end{equation}
and
\begin{equation}\label{x_k+1}
\left|x_{k+1}^{(n)}\right|<\left|\lambda^nx_k\right|
\end{equation}
Since $\textbf{x}$ is a supercyclic vector, there exist a number
$n\in\mathbb N$ and $\alpha\in\mathbb K$ such that
$$
\parallel\alpha T_{\lambda,\mu}^n(\textbf{x})-\textbf{e}_{k+1}\parallel<1.
$$
It follows that
\begin{equation}\label{12ineq}
\left|\alpha x_k^{(n)}\right|<1,\ \ \ \left|\alpha x_{k+1}^{(n)}-1\right|<1
\end{equation}
On the other hand, from \eqref{x_k} and \eqref{x_k+1} we obtain
$$
\left|\alpha x_k^{(n)}\right|=\left|\alpha\lambda^nx_k\right|
$$
and
$$
\left|\alpha x_{k+1}^{(n)}\right|<\left|\alpha\lambda^nx_k\right|.
$$
From these and the non-Archimedean norm's property one finds
$$
\left|\alpha x_k^{(n)}\right|<1,\ \ \ \left|\alpha x_{k+1}^{(n)}-1\right|=1.
$$
It is a contradiction to \eqref{12ineq}. This yields that
$T_{\lambda,\mu}$ can not be supercyclic if $|\mu|\leq|\lambda|$.

(iii)$\Rightarrow$(i) Let $|\lambda|<|\mu|$. Take an arbitrary
vector $\textbf{x}\in c_{00}(\mathbb N)$. Then there exists a
number $\ell\in\mathbb N$ such that $x_\ell\neq0$ and $x_j=0$ for
all $j>\ell$. It is clear that $x_j^{(n)}=0,\ j>\ell$ for any
$n\geq1$. For a given $n\geq1$ we have
$$
x_j^{(n)}=\sum_{i=0}^{\ell-j}\binom{n}{i}\lambda^{n-i}\mu^ix_{j+i},\
\ \ \ \ \ j=\overline{1,l}.
$$
From this one gets
$$
\left|x_j^{(n)}\right|\leq\left|\lambda^{n-\ell+j}\mu^{\ell-j}\right|\cdot|x_j|.
$$
Hence,
\begin{equation}\label{T^n}
\parallel T_{\lambda,\mu}^n(\textbf{x})\parallel\leq\left|\lambda^{n-k+j}\mu^{k-j}\right|\cdot\parallel\textbf{x}\parallel
\end{equation}
Let now pick an arbitrary vector $\textbf{y}\in c_{00}(\mathbb N)$
and compute the norm of $S_{\mu,\lambda}^n(\textbf{y})$, where the
operator $S_{\mu,\lambda}$ is defined by \eqref{S_m,l}. From
\eqref{S_m,l} and using the non-Archimedean norm's property one
finds
\begin{equation}\label{S^n}
\parallel S^n_{\mu,\lambda}(\textbf{y})\parallel\leq\left|\mu^{-n}\right|\cdot\parallel\textbf{y}\parallel.
\end{equation}
Multiplying \eqref{T^n} and \eqref{S^n} we obtain
$$
\parallel T^n_{\lambda,\mu}(\textbf{x})\parallel\cdot\parallel S^n_{\mu,\lambda}(\textbf{y})\parallel\leq
\left(\frac{|\lambda|}{|\mu|}\right)^{n-k+j}\parallel\textbf{x}\parallel\cdot\parallel\textbf{y}\parallel.
$$
Since $|\lambda|<|\mu|$ we have $\parallel
T^n_{\lambda,\mu}(\textbf{x})\parallel\cdot\parallel
S^n_{\mu,\lambda}(\textbf{y})\parallel\to0$ as $n\to\infty$.
Hence, $T_{\lambda,\mu}$ satisfies the Supercyclic Criterion if
$|\lambda|<|\mu|$. This completes the proof.
\end{proof}

\begin{rem} We stress that all operators on $c_0$  considered above are hypercyclic
(resp. supercyclic) if they satisfy Hypercyclic (reps.
Supercyclic) Criterion. It is natural to ask: does there exists a
hypercyclic (resp. supercyclic) linear operator on $c_0$ which
does not satisfy HC (SC)? We conjecture that such kind of linear
operators on $c_0$ do not exist.
\end{rem}

\end{document}